\theoremstyle{definition}
\newtheorem{defi}{Definition}
\newtheorem{ex}{Example}
\theoremstyle{plain}
\newtheorem{lem}{Lemma}
\newtheorem{thm}{Theorem}
\newtheorem{cor}{Corollary}
\theoremstyle{remark}
\newtheorem{rem}{Remark}
\newcommand{\numberthis}{\addtocounter{equation}{1}\tag{\theequation}}
\newcommand{\A}{\mathcal{A}}
\newcommand{\B}{\mathcal{B}}
\renewcommand{\P}{\mathcal{P}}
\newcommand{\Pn}{\mathcal{P}^{(n)}}
\newcommand{\OP}[1]{\mathcal P_\ast^{#1}}
\newcommand{\M}{\mathcal{M}}
\newcommand{\R}{\mathbb{R}}
\newcommand{\N}{\mathbb{N}}
\newcommand{\PE}{h^{PE}}
\newcommand{\card}[1]{\##1}
\newcommand{\todo}[1]{}
\renewcommand{\epsilon}{\varepsilon}
\begin{document}

\title{Equality of Kolmogorov-Sinai and permutation entropy for one-dimensional maps consisting of countably many monotone parts}

\author{Tim Gutjahr, Karsten Keller}

\maketitle

\begin{abstract}
In this paper, we show that, under some technical assumptions, the Kolmogorov-Sinai entropy and the permutation entropy are equal for one-dimensional maps if there exists a countable partition of the domain of definition into intervals such that the considered map is monotone on each of those intervals.
This is a generalization of a result by Bandt, Pompe and G. Keller, who showed that the above holds true under the additional assumptions that the number of intervals on which the map is monotone is finite and that the map is continuous on each of those intervals.
\end{abstract}

%
%
%
%
%

\section{Introduction}

Determining the \textit{Kolmogorov-Sinai entropy} (\textit{K-S entropy}) of a dynamical system is a key part in the analysis of a system's complexity. This entropy is a measure for the amount of information that is gained on average by observing the system's dynamics. While the K-S entropy has a precise mathematical definition and interesting properties, its computation can be difficult. Therefore, in 2002, Bandt and Pompe introduced the so called \textit{permutation entropy} as an alternative measure for the complexity of a one-dimensional dynamical system which is easier to evaluate numerically than the K-S entropy \cite{bandt2002_1}. The permutation entropy is a measure for the information contained in the ordinal structure of a dynamical system. One can extend the definition of the permutation entropy to multi-dimensional systems by introducing a number of real-valued random variables as observables that each project the multi-dimensional dynamics into the real numbers in which the ordinal structure is then considered.\\
The relatively easy calculation of the permutation entropy led to many practical applications and gave rise to a number of theoretical questions as well (e.g. \cite{Li2007},\cite{Nicolaou2011},\cite{silva2012}). One of the first questions someone could ask is about the equality of the Kolmogorov-Sinai and the permutation entropy. Bandt, Pompe and G. Keller showed that those entropies are equal for one-dimensional interval maps  if there exists a finite partition of the domain of definition into intervals such that the considered map is monotone and continuous on each of those intervals \cite{bandt2002_2}. It was also shown that the permutation entropy is an upper bound for the K-S entropy for all one-dimensional systems (see \cite{bandt2002_2} or \cite{amigo2005}). This inequality can be generalized to multidimensional maps under sufficiently general conditions \cite{unakafov2012}. It is still an open question if and how the condition of piecewise monotony for the equality of the entropies can be generalized to a larger class of one-dimensional maps.\\
In this paper, we are able to show that the equality of K-S and permutation entropy still holds true if we omit the condition of continuity and if there exists a countable partition of the domain of definition into intervals such that the one-dimensional map is monotone on each of those intervals. Unlike in the paper of Bandt, Pompe and G. Keller \cite{bandt2002_2}, we do not require that this partition into intervals is finite.

\subsection{Basic definitions}
We are interested in the complexity of one-dimensional dynamical systems. In this paper, such systems will be a tupel $(\Omega,\B(\Omega),\mu,T)$ consisting of a compact metric space $\Omega\subseteq \R$, the Borel $\sigma$-algebra $\B(\Omega)$ on $\Omega$, a $\B(\Omega)$-measurable map $T:\Omega\rightarrow\Omega$ and a probability measure $\mu$ defined on $\B(\Omega)$. The system $(\Omega,\B(\Omega),\mu,T)$ is called measure-preserving dynamical system if, additionally, the map $T$ is measure-preserving, i.e. $\mu(T^{-1}(A)) = \mu(A)$ for all $A\in \B(\Omega)$.\\
%
%

The value of the Kolmogorov-Sinai entropy depends on the position of the elements of orbits $(\omega,T(\omega),T^{2}(\omega),\ldots)$ with respect to a finite or countable partition. The iterates $T^t(\omega)$ can be recursively defined as $T^t(\omega) = T(T^{(t-1)}(\omega))$ for $t\in \N$ and $\omega \in \Omega$ with $T^0(\omega) = \omega$.\\
For the definition of the permutation entropy, we investigate for which $s,t \in \N_0$ the inequality $T^s(\omega)\leq T^t(\omega)$ holds true. To simplify our argumentation we want to exclude the possibility that $T^s(\omega)$ is equal to $T^t(\omega)$ for $s,t\in \N$ with $s\neq t$, so that the inequalities $T^s(\omega)\leq T^t(\omega)$ and $T^t(\omega)\leq T^s(\omega)$ are mutually exclusive. To achieve this, we require that $T$ is aperiodic with regard to $\mu$, which means that $\mu(\bigcup_{n=1}^\infty\{\omega\in\Omega|~T^{ n}(\omega)=\omega\}) = 0$ holds true. For aperiodic maps, the probability of two different iterates of a single point being equal is zero. Being aperiodic is not a significant restriction though, as noted in section~\ref{sec:discussion}.\\

When determining the complexity of a dynamical system, we consider the probabilities of $\omega,T(\omega),T^{ 2}(\omega),\ldots,T^{(n-1)}(\omega)$ lying within specific sequences of sets. This leads to the definition of the Kolmogorov-Sinai entropy:

\begin{defi}[Kolmogorov-Sinai entropy]
Let $(\Omega,\A,\mu,T)$ be a measure-preserving dynamical system and $\P=\{P_i\}_{i\in I}$ a partition of $\Omega$ with some finite or countable index set $I$.
Define for $n\in \N$ and a multi index $\bm{i}=(i_0,i_1,\ldots i_{n-1})\in I^n$ the set
\[ P(\bm{i}):= \bigcap_{k=0}^{n-1} T^{-k}(P_{i_k}) =  P_{i_0}\cap T^{-1}(P_{i_1}) \cap\ldots T^{-n+1}(P_{i_{n-1}}) \]
and the partition $\Pn := \{ P(\bm{i})|~\bm{i}\in I^n \}$. The\textit{ Kolmogorov-Sinai entropy} (or \textit{entropy rate}) of $T$ \textit{with regard to the partition} $\P$ is defined as
\begin{equation}
h(T,\P):= \lim_{n\to \infty} \frac{1}{n}H(\Pn),
\label{eq:KS}
\end{equation}
where $H(\Pn)= -\sum_{i\in I^n}\mu(P(\bm i))\log(\mu(P(\bm i)))$ is the \textit{Shannon entropy} of the partition $\Pn$. By
\[ h(T) := \sup_{H(\P)<\infty}h(T,\P) \]
the \textit{Kolmogorov-Sinai entropy} of $T$ is defined, where the supremum is taken over all finite or over all countable partitions with finite entropy.
\end{defi}

\begin{rem}
Originally, the Kolmogorov-Sinai entropy was defined as the supremum of the entropy rates over finite partitions, disregarding countable partitions. However, according to Abramov's Theorem, the supremum of the entropy rates over all countable partitions with finite entropy is not larger than the supremum of the entropy rates over all finite partitions \cite{keller1998}.
\end{rem}

If we do not consider the probabilities of $\omega,T(\omega),T^{ 2}(\omega),\ldots,T^{(n-1)}(\omega)$ lying within specific sequences of sets but instead the probabilities of\\
$\omega,T(\omega),T^{ 2}(\omega),\ldots,T^{(n-1)}(\omega)$ being in some specific order relation, we get the definition of the permutation entropy:

\begin{defi}[Permutation entropy]
Let $(\Omega,\B(\Omega),\mu,T)$ be a measure-\\
preserving dynamical system with $\Omega \subseteq \R$ and
\[ \Pi_n:=\{(\pi_0,\pi_1,\ldots \pi_{n-1}) \in \{0,1,\ldots n-1\}^n|~\pi_i\neq\pi_j~\textrm{for}~i\neq j\} \]
the set of all permutations of length $n$. For a permutation $\pi=(\pi_0,\pi_1,...,\pi_{n-1})\in \Pi_n$ we denote the set of points with ordinal pattern $\pi$ by
\begin{equation}
P_\pi:=\{\omega\in\Omega|~T^{\pi_0}(\omega)\leq T^{\pi_1}(\omega)\leq\ldots\leq T^{\pi_{n-1}}(\omega) \}
\label{eq:encodingPi}
\end{equation}
and by $\OP{n} := \{P_\pi|~ \pi\in \Pi_n\}$ the partition of $\Omega$ into these sets. The permutation entropy of $T$ is defined as
\begin{equation}
\PE(T) := \liminf_{n\to \infty}\frac{1}{n} H(\OP{n}),
\label{eq:PE}
\end{equation}
where $H(\OP{n})= -\sum_{\pi\in \Pi_n}\mu(P_\pi)\log(\mu(P_\pi))$ is the Shannon entropy of the partition $\OP{n}$.
\end{defi}

\paragraph{Remarks}
\begin{enumerate}
\item Like Amig\'{o}, Kennel and Kocarev \cite{amigo2005}, we use the limit inferior for the definition of the permutation entropy in \eqref{eq:PE} instead of the limit like Bandt, Pompe and G. Keller \cite{bandt2002_2}. This is because, unlike in \eqref{eq:KS}, one does not know whether $\frac{1}{n} H(\OP{n})$ converges for $n\to \infty$. Alternatively, one could use the limit superior in \eqref{eq:PE} like, for example, A. M. Unakafov and V. A. Unakafova in \cite{unakafov2012}. By replacing the limit inferior with the limit superior in the argumentation of this paper each statements remains valid so that we can conclude that the limit in \eqref{eq:PE} does exist for the here considered class of maps $T$.
\item Technically speaking, the collection $\OP{n}$ of sets $P_\pi, \pi \in \Pi_n$, is not actually a partition. A point $\omega\in \Omega$ with $T^{i}(\omega) = T^{j}(\omega)$ for some $i,j\in\N_0$ with $0\leq i<j\leq n-1$ belongs to at least two sets $P_\pi \in \OP{n}$. However, such points belong to the set of (pre-)periodic points, which has measure zero for aperiodic maps $T$. So the sets of points $P_\pi$ with ordinal patterns $\pi\in \Pi_n$ are only disjoint $\mu$-almost surely. This is not a problem because the value of the entropy is not affected by sets of measure zero.
\end{enumerate}

\subsection{The Main result}

Since taking the supremum over all finite partitions is necessary to calculate the Kolmogorov-Sinai entropy, its determining can be difficult. There are theoretical results that ensure, under some conditions, the existence of a partition, such that the entropy rate with regards to this partition yields the K-S entropy. However, in practice, one does not know how such partitions look like. Additionally, this partition depends on the dynamics $T$ whose precise description might be unknown as well for practical applications.\\
The permutation entropy has the advantage that it can be calculated without having to find such partitions. The ordinal patterns necessary for the calculating of the permutation entropy automatically partition the space $\Omega$ in a way that can capture the information of a system, independently of the considered map $T$.\\
We would like to know whether the complexity of the ordinal structure is equal to the complexity of partitions generated by iteration of $T$. That is, does
\[ \PE(T) = h(T) \]
hold true? It is possible to show that
\begin{equation}
\PE(T) \geq h(T)
\label{eq:inequality}
\end{equation}
is fulfilled for a measure-preserving dynamical system $(\Omega,\B(\Omega),\mu,T)$ with $\Omega\subseteq\R$ (see \cite{bandt2002_2} or \cite{amigo2005}).

Bandt, Pompe and G. Keller proved the following statement in 2002 \cite{bandt2002_2}:

\begin{thm}
Let $(\Omega,\B(\Omega),\mu,T)$ be a measure-preserving dynamical system for some interval $\Omega\subseteq \R$. If $T$ is piecewise monotone and continuous on each monotony interval of $T$, then
\begin{equation}
\PE(T) = h(T)
\label{eq:PE=KS}
\end{equation}
holds true.
\label{thm:bandtPompe}
\end{thm}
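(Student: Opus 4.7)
The inequality $\PE(T) \geq h(T)$ is already available from~\eqref{eq:inequality}, so the plan is to establish the reverse inequality $\PE(T) \leq h(T)$. Let $\P = \{P_1, \dots, P_k\}$ denote the finite monotony partition of $\Omega$ provided by the hypothesis. Since $\P$ has only finitely many elements, it has finite Shannon entropy and $h(T,\P) \leq h(T)$, so it suffices to show $\PE(T) \leq h(T,\P)$.

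First I would analyze the cells of the iterated partition $\Pn$. A short induction on $n$ shows that on each cell $P(\bm{i}) = \bigcap_{k=0}^{n-1} T^{-k}(P_{i_k})$ every iterate $T^k$ with $0 \leq k \leq n-1$ is continuous and monotone: $T$ is continuous and monotone on $P_{i_0}$, and inductively $T^{k-1}$ is continuous and monotone on $P(\bm{i})$ with image in $P_{i_{k-1}}$, so $T^k = T \circ T^{k-1}$ inherits both properties. In particular, each $P(\bm{i})$ is itself a subinterval of $\Omega$ and the map $\omega \mapsto (T^0\omega, T^1\omega, \dots, T^{n-1}\omega)$ is continuous with every coordinate monotone in $\omega$.

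The heart of the proof is a bound on how many distinct ordinal patterns of positive $\mu$-measure can occur inside a single cell $P(\bm{i})$. As $\omega$ ranges over $P(\bm{i})$ the pattern changes only at points where $T^i\omega = T^j\omega$ for some $0 \leq i<j\leq n-1$, and by aperiodicity these crossing loci carry no mass. The step I expect to be the main obstacle is to show that the number of patterns of positive measure in any $P(\bm{i})$ grows at most polynomially in $n$, say by some $p(n)$ independent of $\bm{i}$. For piecewise-linear examples (e.g.\ the tent map) each difference $T^i - T^j$ is itself piecewise linear on $P(\bm{i})$, giving at most one sign change per linearity piece and thus an easy polynomial bound; in the general continuous-monotone setting one must control the crossings by combining the monotonicity of the restricted iterates with a careful combinatorial count of how such crossings can proliferate under composition of the monotone pieces of $T$.

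Granted such a polynomial bound, the conditional Shannon entropy obeys $H(\OP{n} \mid \Pn) \leq \log p(n) = O(\log n)$, so
\begin{equation*}
H(\OP{n}) \;\leq\; H(\Pn) + H(\OP{n} \mid \Pn) \;\leq\; H(\Pn) + O(\log n).
\end{equation*}
Dividing by $n$ and passing to the limit inferior yields $\PE(T) \leq h(T,\P) \leq h(T)$, which combined with~\eqref{eq:inequality} gives the claimed equality.
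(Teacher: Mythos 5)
Your reduction is sound as far as it goes: the inequality $H(\OP{n})\leq H(\Pn)+H(\OP{n}\mid\Pn)$ together with a uniform bound on the number of positive-measure patterns per cell is exactly the skeleton of the paper's Lemma~\ref{lem:error}, and your observation that each cell $P(\bm i)$ is an interval on which all iterates $T^0,\dots,T^{n-1}$ are continuous and monotone is correct. But the step you yourself flag as ``the main obstacle'' --- a polynomial bound $p(n)$ on the number of ordinal patterns realized with positive measure inside a single cell --- is the entire content of the theorem, and you have not proved it. It is not a routine crossing count: for an arbitrary family of $n$ continuous \emph{increasing} functions on an interval the number of realized orderings can be exponential in $n$ (take $n-1$ increasing functions wiggling independently above and below the identity; the sign vector of the differences already realizes exponentially many values), so any proof must exploit that the functions are iterates of a single map. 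What monotonicity alone gives you is that the comparison of $T^s\omega$ and $T^t\omega$ is determined, up to orientation, by that of $T^{s+1}\omega$ and $T^{t+1}\omega$ whenever $i_s=i_t$; propagating this yields only the bound $\card{S_n^{\P}(\bm i)}\leq 2^{\card{\{s\,:\,i_s=i_{n-1}\}}}$ of Lemma~\ref{lem:Sn}, which is exponential in $n$ in the worst case and gives $\PE(T)\leq h(T)+\log 2$, not equality. Your appeal to the piecewise-linear case (one sign change per linearity piece) does not transfer: a continuous monotone map $g$ can have $g(y)-y$ change sign arbitrarily often, so the difference $T^t-T^s$ on a cell is not controlled piece by piece.

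Note also that the paper does not prove Theorem~\ref{thm:bandtPompe} directly; it quotes it from \cite{bandt2002_2} and instead proves the stronger Theorem~\ref{thm:mainThm}. The two ways the literature closes the gap you left open are instructive: Bandt, Pompe and G.~Keller use continuity to identify the crossing loci $\{T^s\omega=T^t\omega\}$ with preimages of periodic points of $T^{t-s}$ restricted to a monotone branch, and then analyze periodic points; the present paper drops continuity, accepts the exponential bound of Lemma~\ref{lem:Sn}, and defeats it by refining $\M$ with Rokhlin-tower partitions (Theorem~\ref{lem:nonErgodic}) so that returns to the same cell, and hence the exponent $\card{\{s\,:\,i_s=i_{n-1}\}}$, are rare. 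Either of these is a substantial argument; without one of them your proposal does not establish the reverse inequality. A secondary issue: you invoke aperiodicity (``these crossing loci carry no mass''), which is not among the hypotheses of Theorem~\ref{thm:bandtPompe}; the periodic part of $\mu$ has to be split off and treated separately, as in the remark in Section~\ref{sec:discussion}.
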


Being piecewise monotone is defined in the following way:

\begin{defi}[Piecewise monotony]
Let $T:\Omega \rightarrow \Omega$ be a map for $\Omega \subseteq \R$. $T$ is called \textit{monotone} on a set $M\subseteq\Omega$ if
\[ \omega_1 \leq \omega_1 \quad\textrm{implies}\quad T(\omega_1)\leq T(\omega_2)\quad\mathrm{for}~\textrm{all}~\omega_1,\omega_2 \in M\]
or
\[ \omega_1 \leq \omega_1 \quad\textrm{implies}\quad T(\omega_1)> T(\omega_2)\quad\mathrm{for}~\textrm{all}~\omega_1,\omega_2 \in M.\]
The map $T$ is called \textit{piecewise monotone} if there exists a finite partition\\
$\M=\{M_1,M_2,...,M_n\}$ of $\Omega$ into intervals, including single point sets, such that $T$ is monotone on each interval $M\in\M$. (Notice that, unlike Bandt, Pompe and G. Keller, we do not require that $T$ is continuous on each interval $M\in \M$.)\\
We call $T$ \textit{countable piecewise monotone} if there exists a countable partition $\M=\{M_1,M_2,...\}$ of $\Omega$ into intervals, including single point sets, such that $T$ is monotone on each interval $M\in\M$.\\
We call $\M$ \textit{partition into monotony intervals} of $T$.
\end{defi}

Given a compact metric space $\Omega\subseteq \R$, the fact that $T:\Omega \rightarrow \Omega$ is (countable) piecewise monotone automatically implies that $\Omega$ can be represented as a finite (or countable) union of intervals. However, $\Omega$ itself can be a more general set than a single interval.\\

It was not known whether \eqref{eq:PE=KS} is true for a more general case than piecewise monotony.

\begin{ex}[Gauss function]
The map $T:[0,1]\rightarrow [0,1]$ with
\[ T(\omega) = \begin{cases}1/\omega~\mathrm{mod}~1 & \mathrm{if}~ \omega>0 \\ 0 & \mathrm{if}~ \omega=0 \end{cases} \]
is called Gauss function (see Figure~\ref{fig:gaussMap}). This map is measure-preserving with regard to the measure $\mu$, which is defined by $\mu(A) = \frac{1}{\log 2}\int_A \frac{1}{1+x}~{d}x$ for all $A\in \A$ \cite{einsiedler2010}. The partition $\M=\{[\frac{1}{n+1},\frac{1}{n}[|~n\in\N\}\cup \{\{0\}\}$ of $[0,1]$ is a partition into monotony intervals of $T$. The map $T$ is countable piecewise monotone but not piecewise monotone. Therefore, we cannot use Theorem~\ref{thm:bandtPompe} to decide whether $\PE(T)$ and $h(T)$ are equal. However, we can use our new theorem below to show the equality as explained in Section~\ref{sec:discussion}.
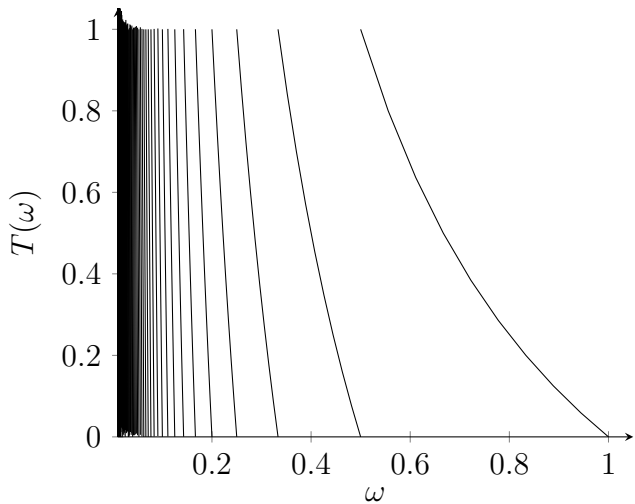
\begin{figure}[h]
\begin{tikzpicture}
\begin{axis}[
    axis lines = left,
    xlabel = $\omega$,
    ylabel = {$T(\omega)$},
    xmax = 1.05,
    ymin = 0,
    ymax = 1.05,
]
\foreach \i in {1,2,...,25}{
\pgfmathsetmacro{\l}{1/(\i+1)};
\pgfmathsetmacro{\r}{1/(\i)};
\addplot[
    domain=\l:\r, 
    samples=10, 
]
{1/x - \i};
}
\foreach \i in {26,...,100}{
\pgfmathsetmacro{\l}{1/(\i+1)};
\pgfmathsetmacro{\r}{1/(\i)};
\addplot[
    domain=\l:\r, 
    samples=3, 
]
{1/x - \i};
}
\end{axis}
\end{tikzpicture}
\caption{Graph of the Gauss function $T$.}
\label{fig:gaussMap}
\end{figure}
\label{ex:gaussMap}
\end{ex}

We will show here that \eqref{eq:PE=KS} is true for countable piecewise monotone maps $T$ as well. Our main result can be formulated as follows:

\begin{thm}[Main result]
Let $(\Omega,\B(\Omega),\mu,T)$ be a measure-preserving dynamical system with $\Omega\subseteq \R$ being a compact metric space. Let $T$ be aperiodic and countable piecewise monotone and $\M$ a countable partition into monotony intervals of $T$. If $H(\M)<\infty$ holds true, then
\[ \PE(T) = h(T). \]
\label{thm:mainThm}
\end{thm}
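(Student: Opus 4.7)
The proof plan splits along the known inequality \eqref{eq:inequality}: since $\PE(T)\geq h(T)$ is already established, I only need to produce $\PE(T)\leq h(T)$. My target will be the intermediate estimate $\PE(T)\leq h(T,\M)$; combined with $h(T,\M)\leq h(T)$, which is valid whenever $H(\M)<\infty$, the theorem follows.

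The central structural observation is that on every atom $C$ of the dynamical partition $\M^{(n)}:=\bigvee_{k=0}^{n-1}T^{-k}\M$, the iterate $T^k|_C$ is monotone for every $k=0,\ldots,n-1$. This follows by induction on $k$: $T|_{M_{i_0}}$ is monotone by hypothesis, hence so is $T|_C$; since $T(C)\subseteq M_{i_1}$, the composition $T^2|_C=T|_{M_{i_1}}\circ T|_C$ is a composition of monotone maps, hence monotone; and so on. This rigidity constrains how the ordinal pattern partition $\OP{n}$ can split an atom $C$ of $\M^{(n)}$: whenever $T^i\omega$ and $T^j\omega$ lie in distinct monotony intervals their ordering is predetermined by the ordering of those intervals, leaving only intra-interval comparisons variable. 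This is the place where the monotony hypothesis on $T$ enters, and from it I aim at an entropy comparison of the form
\[
H(\OP{n}) \leq H(\M^{(n)}) + o(n),
\]
which after dividing by $n$ and passing to the limit inferior yields $\PE(T)\leq h(T,\M)$.

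The main obstacle is to carry out this comparison when $\M$ is countable rather than finite. In the setting of Bandt, Pompe and G.~Keller the number of ordinal patterns that can be realised on a single atom of $\M^{(n)}$ is bounded combinatorially in terms of $|\M|$, and summing over atoms together with the finite-entropy bound $H(\M^{(n)})\leq nH(\M)$ yields the desired inequality. In our situation $\M$ is infinite and the number of monotony intervals visited in $n$ steps is unbounded, so a direct count is not available. My plan is to truncate $\M$ to a finite partition $\M_K:=\{M_1,\ldots,M_K,\bigcup_{i>K}M_i\}$, perform a BKP-type count on the ``good'' events where all $n$ iterates stay in $\{M_1,\ldots,M_K\}$, and control the residual contribution from the tail through subadditivity of Shannon entropy together with the hypothesis $H(\M)<\infty$. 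Aperiodicity enters only to guarantee that the sets $P_\pi$ are $\mu$-almost surely disjoint, so that $H(\OP{n})$ is unambiguously defined. The technical heart of the proof will be a tail estimate that is uniform in $n$ as $K\to\infty$.
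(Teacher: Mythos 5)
There is a genuine gap at the heart of your plan: the estimate $H(\OP{n})\leq H(\M^{(n)})+o(n)$ does \emph{not} follow from the monotonicity rigidity you describe, and no mechanism in your sketch produces it. Your structural observation (iterates of points in one atom of $\M^{(n)}$ have all inter-interval comparisons predetermined, leaving only intra-interval comparisons free) is correct and is exactly the paper's Lemma~\ref{lem:Sn}; but the count it yields is that an atom $C=M(\bm i)$ of $\M^{(n)}$ can meet up to $2^{\card{\{s<n-1:\,i_s=i_{n-1}\}}}\leq 2^{n-1}$ ordinal patterns, because an orbit may return to the same monotony interval at a positive frequency and each such return can contribute a free comparison bit. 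This gives only $\PE(T)\leq h(T,\M)+\log 2$ (the paper's Corollary~\ref{cor:nonErgodic}), not equality. Your truncation $\M_K$ attacks countability, which is not where the difficulty lies: the exponential per-atom multiplicity already occurs for a finite $\M$ once continuity is dropped, and the BKP-type count you invoke for the ``good'' events uses continuity on each monotony piece (via their periodic-point argument), which is not available under the hypotheses of this theorem.

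What is missing is a device forcing orbits not to linger in any single cell of the partition. The paper achieves this by refining $\M$ with partitions $\mathcal Q_d$ built from Rokhlin towers of height $d$: the base $B$ is approximated by finitely many intervals (regularity of $\mu$ on the compact $\Omega$), pulled back and intersected with $\M$ so as to remain a countable partition into intervals with finite entropy, and on a set of measure $\geq 1-\epsilon$ the cells satisfy $Q_j\cap T^{-k}(Q_j)=\emptyset$ for $k=1,\dots,d-1$; hence the return count is at most $(n-2)/d+1$ and the error term becomes $O(1/d)$. Note also that aperiodicity is needed precisely for the Rokhlin Lemma --- not merely, as you state, for the $\mu$-a.s.\ disjointness of the sets $P_\pi$ --- and that some such measure-theoretic construction (or an ergodic-theoretic substitute such as Birkhoff's theorem in the ergodic case) appears unavoidable, since the combinatorics alone cannot beat the $\log 2$ barrier.
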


As already mentioned, the above theorem is a generalization of the result of Bandt, Pompe and G. Keller (Theorem~\ref{thm:bandtPompe}). Note that in the simpler case of piecewise monotony the restriction $H(\M)<\infty$ is not necessary because $H(\M)$ is always finite for a finite partition $\M$ into monotony intervals.\\
To prove our results, we begin with Lemma~\ref{lem:error} by reducing our problem to a combinatorial one. This is analogue to the approach used in \cite{bandt2002_2}. While Bandt et al. followed this by an examination of periodic points, utilizing the piecewise monotony and continuity, we use the piecewise monotony more directly and then apply measure theoretic arguments.\\
Bandt et al. showed the equality of the topological entropy and a topological variant of the permutation as well \cite{bandt2002_2}, which is generally not possible for maps with infinitely many monotony intervals, as Misiurewicz had shown \cite{misiurewicz2003}.

\section{Proofs}

\paragraph*{Rough outline of the proof}
Given a finite or countable partition $\P$, we show in subsection~\ref{sec:upperBound} that the entropy difference $\PE(T)-h(T,\P)$ is bounded from above by a term depending on the number of intersections between sets of points with an ordinal pattern and the sets of the partition $\Pn$. This number of intersections depends on the chosen partition $\P$.\\
Using the monotony of the map $T$, we chose the partition $\P$ as the partition into monotony intervals $\M$ in the next subsection. We show that for this choice of $\P$ the upper bound established in the previous subsection is finite. In particular, we show that the upper bound for the entropy difference depends on how frequently iterates of the map $T$ are lying within the same subset of the partition $\P$.\\
This fact allows us to create an arbitrary small upper bound for the entropy difference by constructing partitions $\P$ into monotony intervals in such a way that iterates of $T$ cannot stay within the same subset of $\P$ too frequently. This is done in the more technical subsection~\ref{sec:rokhlinTowers} by using Rokhlin towers.

\subsection{Upper bound for entropy difference}
\label{sec:upperBound}

We look at the difference between $\PE(T)$ and $h(T)$ by considering the refinement $\OP{n}\vee \Pn$ of an ordinal partition $\OP{n}$ and the partition $\Pn$ used in the definition of the Kolmogorov-Sinai entropy. The \textit{refinement} of two partitions $\P=\{P_i\}_{i\in I}$ and $\mathcal Q = \{Q_j\}_{j\in J}$ of $\Omega$ is defined by
\[ \P\vee \mathcal Q:=\{P_i\cap Q_j|~ i \in I, j \in J~\textrm{and}~P_i\cap Q_j \neq \emptyset \}.\]
One can easily verify that $H(\OP{n})\leq H(\Pn \vee \OP{n})$ is true. In the following lemma, we show that $H(\Pn \vee \OP{n})$ is bounded from above by $H(\Pn)$ plus some term depending on $n$ and the given partition $\P$. To find this term, we consider sets
\begin{equation}
S_n^{\P}(\bm i) := \{ \pi \in \Pi_n|~P(\bm i)\cap P_\pi \neq \emptyset \}
\label{eq:Sni}
\end{equation}
for $\bm i \in I^n$, which contain all permutations whose ordinal patterns are intersecting the set $P(\bm i)$. Roughly speaking, if the size of $S_n^{\P}(\bm i)$ does not grow too fast on average for increasing $n$, then the partition $\OP{n}$ does not add a lot of new information to $\Pn$, so $H(\Pn \vee \OP{n})$ and $H(\Pn)$ will be similar in size.\\
We give an upper bound on the difference between $\PE(T)$ and $h(T)$ based on $\card{S_n^{\P}(\bm i)}$, where $\card A$ denotes the number of elements in a set $A$:

\begin{lem}
Let $(\Omega,\B(\Omega),\mu,T)$ be a measure-preserving dynamical system with $\Omega\subseteq \R$ and $\P=\{P_i\}_{i\in I}$ a finite or countable partition of $\Omega$ with $H(\P)<\infty$. Then
\[ \PE(T) \leq h(T,\P)+ \liminf_{n\to\infty}\frac{1}{n} \sum_{\bm i\in I^n} \mu(P(\bm i))\log(\card{S_n^{\P}(\bm i)}) \]
holds true with $S_n^{\P}(\bm i)$ as defined in \eqref{eq:Sni}.
\label{lem:error}
\end{lem}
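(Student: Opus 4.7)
The plan is to sandwich $H(\OP{n})$ between quantities involving the auxiliary partition $\Pn$ via the chain rule, and then bound the resulting conditional entropy by a purely combinatorial counting argument. To start, monotonicity of Shannon entropy under refinement gives $H(\OP{n}) \le H(\Pn \vee \OP{n})$, and the chain rule for Shannon entropy yields $H(\Pn \vee \OP{n}) = H(\Pn) + H(\OP{n} \mid \Pn)$. The hypothesis $H(\P) < \infty$ together with the standard subadditivity bound $H(\Pn) \le n H(\P)$ ensures that $H(\Pn)$ is finite, while $H(\OP{n}) \le \log(n!)$ is finite because $\OP{n}$ has at most $n!$ cells; hence the conditional entropy is well-defined and finite.

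The main analytical step is to control $H(\OP{n} \mid \Pn)$ cell by cell. Fix $\bm i \in I^n$ with $\mu(P(\bm i)) > 0$. Conditioning on $P(\bm i)$, the probability of an ordinal cell $P_\pi$ equals $\mu(P(\bm i) \cap P_\pi)/\mu(P(\bm i))$. By the very definition of $S_n^{\P}(\bm i)$, this probability is zero unless $\pi \in S_n^{\P}(\bm i)$, so the conditional distribution is supported on at most $\card{S_n^{\P}(\bm i)}$ atoms. Since the Shannon entropy of any probability distribution on $k$ atoms is at most $\log k$ (maximized by the uniform distribution), the cell-$\bm i$ contribution to $H(\OP{n} \mid \Pn)$ is at most $\mu(P(\bm i)) \log(\card{S_n^{\P}(\bm i)})$. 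Summing over all $\bm i \in I^n$ produces
$$H(\OP{n} \mid \Pn) \le \sum_{\bm i \in I^n} \mu(P(\bm i)) \log(\card{S_n^{\P}(\bm i)}).$$

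Combining the three displayed inequalities, dividing by $n$, and taking $\liminf_{n \to \infty}$ finishes the argument. The sequence $\tfrac{1}{n} H(\Pn)$ converges to $h(T,\P)$ (the K-S entropy rate is a genuine limit whenever $H(\P) < \infty$), so by the identity $\liminf_n (a_n + b_n) = a + \liminf_n b_n$ valid whenever $a_n \to a$, the $\liminf$ of the combined right-hand side is exactly $h(T,\P)$ plus the $\liminf$ of the counting term, which is precisely the claim.

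The main obstacle is not technical depth but care with the countable-partition setting: one must verify that the chain rule for Shannon entropy applies when $\P$ is countable, which is exactly where the hypothesis $H(\P) < \infty$ is used, and one must not confuse the set-theoretic condition $P_\pi \cap P(\bm i) \neq \emptyset$ defining $S_n^{\P}(\bm i)$ with the measure-theoretic condition $\mu(P_\pi \cap P(\bm i)) > 0$. The former is a strictly stronger requirement, so the bound by $\log(\card{S_n^{\P}(\bm i)})$ remains valid even though $\OP{n}$ is only a partition of $\Omega$ modulo $\mu$-null sets.
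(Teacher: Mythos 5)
Your proof is correct and takes essentially the same route as the paper: the paper bounds $H(\Pn\vee\OP{n})-H(\Pn)$ by replacing the masses $\mu(P(\bm i)\cap P_\pi)$ with the uniform distribution over the $\card{S_n^{\P}(\bm i)}$ intersecting cells, which is exactly your conditional-entropy bound $H(\OP{n}\mid\Pn)\leq\sum_{\bm i\in I^n}\mu(P(\bm i))\log(\card{S_n^{\P}(\bm i)})$ written out by hand. One small correction to your closing remark: the condition $P_\pi\cap P(\bm i)\neq\emptyset$ is \emph{weaker} than $\mu(P_\pi\cap P(\bm i))>0$, not stronger, but this is precisely why $S_n^{\P}(\bm i)$ overcounts the support of the conditional distribution and your upper bound stands.
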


\begin{proof}
Let $\OP{n}$ be the partition into ordinal patterns of length $n$. Then
\begin{align*}
H(\OP{n}) & \leq H(\P^{(n)}\vee \OP{n})\\
&= -\sum_{\bm i\in I^n} \sum_{\pi \in \Pi_n} \mu(P(\bm i) \cap P_\pi)\log(\mu(P(\bm i)\cap P_\pi))\\
&= -\sum_{\bm i\in I^n} \sum_{\pi \in S_n^{\P}(\bm i)} \mu(P(\bm i) \cap P_\pi)\log(\mu(P(\bm i) \cap P_\pi))\\
&\leq -\sum_{\bm i\in I^n} \sum_{\pi \in S_n^{\P}(\bm i)} \frac{\mu(P(\bm i))}{\card{S_n^{\P}(\bm i)}}\log\left(\frac{\mu(P(\bm i))}{\card{S_n^{\P}(\bm i)}}\right)\\
&= -\sum_{\bm i\in I^n}  \mu(P(\bm i))\log\left(\frac{\mu(P(\bm i))}{\card{S_n^{\P}(\bm i)}}\right)\\
&= -\sum_{\bm i\in I^n} \mu(P(\bm i)) \log(\mu(P(\bm i))) + \sum_{\bm i\in I^n} \mu(P(\bm i)) \log(\card{S_n^{\P}(\bm i)})\\
&= H(\P^{(n)}) + \sum_{\bm i\in I^n} \mu(P(\bm i))\log(\card{S_n^{\P}(\bm i)}).
\end{align*}
Dividing both sides by $n$ and taking the limit inferior $n \to \infty$ finishes the proof.
\end{proof}

The above lemma is useful because it allows us to work with the number of elements of $S_n^\P(\bm i)$ and, therefore, to use combinatorial arguments. This is done in the following subsection.

\subsection{Using monotony}

Given a countable partition $\P=\{P_i\}_{i\in I}$ with finite entropy, the term
\[ \liminf_{n\to\infty}\frac{1}{n} \sum_{\bm i\in I^n} \mu(P(\bm i))\log(\card{S_n^{\P}(\bm i)}) \]
bounds the difference between $\PE(T)$ and $h(T,\P)$.\\
Our goal is now to find a sequence of countable partitions\\
$(\P_d)_{d\in \N} = ((P^d_i)_{i\in I_d})_{d\in \N}$ with finite entropy such that\\
$\liminf_{n\to\infty}\frac{1}{n} \sum_{\bm i\in I_d^n} \mu(P^d(\bm i))\log(\card{S_n^{\P_d}(\bm i)})$ converges to zero for $d\to\infty$. To achieve this, we construct the partitions $\P_d$ as special refinements of a given partition $\M$ into monotony intervals. This allows us to give an upper bound on the size of $\card{S_n^{\P_d}(\bm i)}$.

\begin{lem}
Let $T:\Omega\rightarrow \Omega$ be a countable piecewise monotone map on $\Omega \subseteq \R$ and $\M=\{M_i\}_{i\in I}$ a countable partition into monotony intervals of $T$. Then for all $n\in \N$ and multi indices $\bm{i}=(i_0,i_1,\ldots,i_{n-1}) \in I^n$
\[ \card{S_n^{\M}(\bm{i})}\leq2^{\card{ \{s\in \{0,1,...,n-2\}|~i_s = i_{n-1}\} }} \]
holds true.
\label{lem:Sn}
\end{lem}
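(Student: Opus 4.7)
The plan is to bound $\card{S_n^{\M}(\bm{i})}$ by constructing an injection from $S_n^{\M}(\bm{i})$ into $\{0,1\}^J$, where
\[ J := \{ s \in \{0,\ldots,n-2\} : i_s = i_{n-1}\} \]
is the index set of size $k = \card J$ appearing in the exponent. For each $\pi \in S_n^{\M}(\bm{i})$, pick some $\omega \in P(\bm{i}) \cap P_\pi$ and set $b_\pi \in \{0,1\}^J$ by $b_\pi(s) = 1$ iff $T^s(\omega) \leq T^{n-1}(\omega)$; since $\pi$ already prescribes the relative order of every pair of iterates, $b_\pi$ does not depend on the choice of $\omega$.

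The heart of the argument is to show that $\pi$ is uniquely reconstructed from $b_\pi$, $\bm{i}$, and the positions of the intervals $M_i \in \M$ in $\R$. For any pair $0 \leq s < t \leq n-1$ the order of $T^s(\omega)$ and $T^t(\omega)$ is fixed by one of two mechanisms: either (a) $i_s \neq i_t$, so the two iterates lie in distinct intervals of $\M$ and their relative position is forced by the relative position of $M_{i_s}$ and $M_{i_t}$ on the real line; or (b) $i_s = i_t$, so both iterates lie in the same monotony interval and monotony of $T$ there ties the order of $(T^s(\omega), T^t(\omega))$ rigidly to the order of $(T^{s+1}(\omega), T^{t+1}(\omega))$, either preserving or reversing it according as $T$ is increasing or decreasing on $M_{i_s}$.

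Iterating mechanism~(b) is the key reduction. As long as $t+r < n-1$ and $i_{s+r} = i_{t+r}$, the order of $(T^{s+r}(\omega), T^{t+r}(\omega))$ is rigidly determined by the order of $(T^{s+r+1}(\omega), T^{t+r+1}(\omega))$. This iteration terminates after at most $n-1-t$ steps: one either first reaches an $r$ with $i_{s+r} \neq i_{t+r}$ and applies case~(a), or reaches $t+r = n-1$, at which point the resulting pair takes the form $(T^{s'}(\omega), T^{n-1}(\omega))$ with $i_{s'} = i_{n-1}$, i.e.\ $s' \in J$, and its order can be read off directly from $b_\pi(s')$. Chaining the monotony steps backwards then propagates the known order back to $(T^s(\omega), T^t(\omega))$. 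Combining the two cases, every pairwise comparison among $T^0(\omega), \ldots, T^{n-1}(\omega)$, and hence $\pi$ itself, is a deterministic function of the triple $(\bm{i}, \M, b_\pi)$, so $\pi \mapsto b_\pi$ is injective and $\card{S_n^{\M}(\bm{i})} \leq 2^k$. The main technical point I expect is bookkeeping the monotony directions through the backward propagation in step~(b): each step may flip the order, but the flipping pattern is a deterministic function of $\bm{i}$ and $\M$ rather than of $\omega$, so it costs no extra bits in the encoding.
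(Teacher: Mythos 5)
Your proof is correct and is essentially the paper's argument: both reduce every pairwise comparison $T^s(\omega)$ vs.\ $T^t(\omega)$ either to a comparison forced by the relative position of the distinct intervals $M_{i_s},M_{i_t}$ or, via monotony, to the shifted pair $(s+1,t+1)$, propagating along each diagonal $t-s=d$ until the free bit $T^{n-1-d}(\omega)$ vs.\ $T^{n-1}(\omega)$ is reached. The only (cosmetic) difference is that you package this as an explicit injection into $\{0,1\}^J$, whereas the paper bounds $\card{F_n(\bm i)}$ by a product of the diagonal counts $\card{F_n^d(\bm i)}$.
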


In \eqref{eq:encodingPi}, an ordinal pattern of length $n$ was encoded by a permutation $\pi=(\pi_0,\pi_1,\ldots,\pi_{n-1}) \in \Pi_n$, where $T^{\pi_i}(\omega)$ is the $i$-th smallest element in the sequence $\omega,T(\omega),\ldots T^{n-1}(\omega)$.
To prove the above lemma, it helps to consider a different way of encoding ordinal patterns:\\
If we know for all $s,t\in \{0,1,\ldots,n-1\}$ whether $T^{ s}(\omega)\leq T^{ t}(\omega)$ is true, we can determine to which ordinal pattern $P_\pi$ the point $\omega$ belongs to. Therefore, we can encode an ordinal pattern by all pairwise comparisons of elements of the orbit of length $n$.

\begin{proof}[Proof of Lemma~\ref{lem:Sn}]
Fix a multi index $\bm{i}=(i_0,i_1,\ldots,i_{n-1}) \in I^n$ for $n\in \N$. Let
\[ E_n:=\{(s,t)\in\{0,1,...,n-1\}^2|~s<t\} \]
be the set of all possible pairs of ordered integers between $0$ and $n-1$.
Define the set
\[ \mathcal R_{s,t}:=\{\omega \in \Omega: T^s(\omega) \leq T^t(\omega)\}\]
for $(s,t)\in E_n$. We now use $0$ or $1$ to encode which sets $\mathcal R_{s,t}$ are intersecting the set $M(\bm i) = \bigcap_{k=0}^{n-1} T^{-k}(M_{i_k})$ by considering a set of specific functions on $E_n$ into $\{0,1\}$. Define
\[ F_n(\bm i):=\{f:E_n\rightarrow\{0,1\}|~M(\bm i) \cap \bigcap_{\substack{(s,t)\in E_n: \cr f(s,t)=1}} \mathcal R_{s,t}\cap \bigcap_{\substack{(s,t)\in E_n: \cr f(s,t)=0}} \mathcal R_{s,t}^c\neq \emptyset\}, \]
where $\mathcal R_{s,t}^c$ denotes the complement $\Omega\setminus \mathcal R_{s,t}$ of $\mathcal R_{s,t}$. Every permutation $\pi \in S_n^{\P}(\bm i)$ is uniquely determined by a function $f\in F_n(\bm i)$ and vice-versa. This implies
\[ \card{S_n^{\M}(\bm i)} = \card{F_n(\bm i)}. \]
Let
\[ E_n^d:=\{ (s,t)\in E_n|~t-s = d \} \]
be the set of pairs of integer in $E_n$ whose difference is equal to $d$ for $d\in \{1,\ldots,n-1\}$. Similarly, define
\begin{equation*}
F_n^d(\bm i) := \{f:E_n^d\rightarrow\{0,1\}|~M(\bm i) \cap \bigcap_{\substack{(s,t)\in E_n^d: \cr f(s,t)=1}} \mathcal R_{s,t}\cap \bigcap_{\substack{(s,t)\in E_n^d: \cr f(s,t)=0}} \mathcal R_{s,t}^c\neq \emptyset\}.
\end{equation*}
It is easy to see that $E_n= \bigcup_{d=1}^{n-1} E_n^d$ is true. Therefore, every function $f\in F_n$ is generated by a combination of functions $f_d\in F_n^d$ for $d\in\{1,\ldots n-1\}$ (but not every combinations of functions $f_d\in F_n^d$ necessarily generates a function $f\in F_n$). This implies
\begin{equation}
\card{F_n(\bm i)}\leq \prod_{d=1}^{n-1} \card{F_n^d(\bm i)}.
\label{eq:Fndi}
\end{equation}

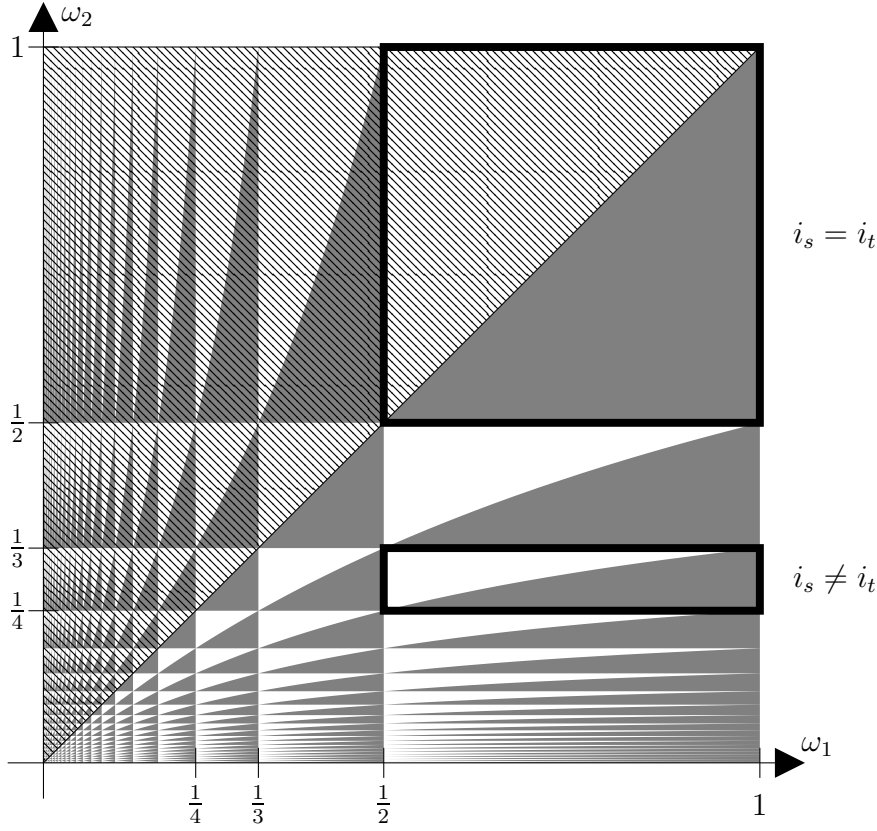
\begin{figure}[h]
\begin{tikzpicture}[scale = 10]
\pgfmathtruncatemacro{\nMax}{20};
\pgfmathsetmacro{\xStart}{1/(\nMax+1)};

\draw(-0.0,\xStart) -- (1.05,\xStart);
\draw(\xStart,-0.0) -- (\xStart,1.05);

\pgfmathsetmacro{\xl}{\xStart-0.02};
\pgfmathsetmacro{\xr}{\xStart+0.02};
\draw (1,\xl) -- (1,\xr);
\draw (\xl,1) -- (\xr,1);
\node[yshift = -10,xshift = -2] at (1,\xl) {{ $1$}};
\node[xshift = -6] at (\xl,1) {{ $1$}};

\foreach \i in {2,3,4}{
	\pgfmathsetmacro{\x}{1/\i}
	\draw (\x,\xl) -- (\x,\xr);
	\draw (\xl,\x) -- (\xr,\x);
	\node[yshift = -10,xshift = -2] at (\x,\xl) {{ $\frac{1}{\i}$}};
	\node[xshift = -6] at (\xl,\x) {{ $\frac{1}{\i}$}};
}

\fill (1.02,\xl) -- (1.06,\xStart) -- (1.02,\xr) -- cycle;
\node[yshift = 6] at (1.07,\xStart) {{ $\omega_1$}};

\fill (\xl,1.02) -- (\xStart,1.06) -- (\xr,1.02) -- cycle;
\node[xshift = 11] at (\xStart,1.04) {{ $\omega_2$}};

\foreach \i in {1,2,...,\nMax}{
	\foreach \j in {1,2,...,\nMax}{
		\pgfmathtruncatemacro{\kMax}{max(20-\i-\j,3)-1};
			\pgfmathsetmacro{\rFirst}{1/\i};
			\pgfmathsetmacro{\rSecond}{1/\j};
			\pgfmathsetmacro{\lFirst}{1/(\i+1)};
			\pgfmathsetmacro{\lSecond}{1/(\j+1)};
			\foreach \k in {1,...,\kMax}{
				\pgfmathsetmacro{\wOne}{\lFirst+(\rFirst-\lFirst)*(\k/(\kMax+1))};
				\pgfmathsetmacro{\wTwo}{1/(1/\wOne+\j-\i)};
			 	\coordinate (p-\k) at (\wOne,\wTwo);
			}
			\fill[opacity=0.5]
			  (\lFirst,\lSecond) \foreach \k in {1,...,\kMax} 	{--(p-\k)}--(\rFirst,\rSecond)--(\rFirst,\lSecond)--cycle;
	}
}

\draw[fill,pattern=north west lines] (\xStart,\xStart)--(\xStart,1)--(1,1)--cycle;

\draw[line width = 3] (1/2,1/3)--(1/2,1/4)--(1,1/4)--(1,1/3)--cycle;
\draw[line width = 3] (1,1/2)--(1/2,1/2)--(1/2,1)--(1,1)--cycle;

\node at (1.1,3/4) {$i_s = i_t$};
\node at (1.1,7/24) {$i_s \neq i_t$};
\end{tikzpicture}
\caption{The striped area corresponds to the set $R=\{(\omega_1,\omega_2)\in\Omega^2|~\omega_1\leq \omega_2\}$ and the gray area to $(T\times T)^{-1}(R)$ for the Gauss function $T$.}
\label{fig:R}
\end{figure}

Now fix $d\in \{1,\ldots n-1\}$. The problem of figuring out whether $f(s,t)$ can be $0$ or $1$ for $(s,t)\in E_n^d$ can be seen as the problem to deduce whether $(T^s(\omega),T^t(\omega))$ lies within $R:=\{(\omega_1,\omega_2)\in \Omega^2|~\omega_1\leq \omega_2\}$ or $R^c$ from the rectangle $M_{i_s}\times M_{i_t}$ in which $(T^s(\omega),T^t(\omega))$ is lying. The set $R$ corresponds to the striped triangle in Figure~\ref{fig:R}.\\
If $i_s \neq i_{t}$ holds true for $(s,d)\in E_n^d$, the points $T^{ s}(\omega)$ and $T^{ t}(\omega)$ lie in different intervals $M_{i_s}$ and $M_{i_t}$ for all $\omega \in M(\bm i)$. In Figure~\ref{fig:R}, this corresponds to the fact that $M_{i_s}\times M_{i_t}$ either completely lies in the triangle $R$ or in the triangle $R^c$. Therefore,
\begin{equation}
f(s,t)=0~\textrm{for all}~f\in F_n^d(\bm i)\quad\textrm{or}\quad f(s,t)=1~\textrm{for all}~f\in F_n^d(\bm i).
\label{eq:f(is=it)}
\end{equation}
If $i_s = i_{t}$ holds true for $(s,t)\in E_n^d$, the points $T^{ s}(\omega)$ and $T^{ t}(\omega)$ lie within the same interval $M_{i_s} = M_{i_t}$ for all $\omega \in M(\bm i)$. In Figure~\ref{fig:R} this corresponds to the fact that $M_{i_s}\times M_{i_t}$ is a square intersecting the diagonal of $\Omega^2$. So we cannot establish a straightforward equation like \eqref{eq:f(is=it)} that determines whether $f(s,t)$ is equal to $0$ or $1$.\\
Since $T$ acts monotonically on the interval $M_{i_s}$, applying the map $T$ on $T^{ s}(\omega)$ and $T^{ t}(\omega)$ preserves or reverses the order relation of $T^{ s}(\omega)$ and $T^{ t}(\omega)$, depending on whether $T$ is increasing or decreasing in $M_{i_s}$. Therefore,
\begin{equation*}
f(s,t)= \begin{cases}
f(s+1,t+1) & \mathrm{if}~T~\mathrm{is~ monotonically~ increasing~ in}~ M_{i_s},\\
1-f(s+1,t+1) & \mathrm{if}~T~\mathrm{is~ monotonically~ decreasing~ in}~ M_{i_s}
\end{cases}
\end{equation*}
for all $f\in F_n^d(\bm i)$ and $(s,t)\in E_n^d$ with $t<n-1$. In terms of Figure~\ref{fig:R}, this means that in each square $M_i\times M_i$ the set $R$ is equal to $(T\times T)^{-1}(R)$ if $T$ is monotonically increasing in $M_i$ and equal to $(T\times T)^{-1}(R^c)$ if $T$ is monotonically decreasing in $M_i$.\\
So every value $f(s,t)$ can be uniquely determined by the subsequent value $f(s+1,t+1)$ for all $t<n-1$, which implies
\[ \card{F_n^d((i_0,\ldots i_{n-1}))} = \card{F_{n-1}^d((i_1,\ldots i_{n-1}))} = \ldots = \card{F_{d+1}^d((i_{n-d-1},\ldots i_{n-1}))}. \]
Therefore, the value of $\card{F_n^d(\bm i)}$ does not depend on $n$ but on the amount of possibilities for the last value $f(n-1-d,n-1)$. Hence
\[ \card{F_n^d(\bm i)} = \card{ \{f(n-d-1,n-1)|~f\in F_n^d(\bm i)\}}. \]
Since $\card{ \{f(n-d-1,n-1)|~f\in F_n^d(\bm i)\}} = 1$ if $i_{n-d-1}\neq i_{n-1}$ according to \eqref{eq:f(is=it)} and there are at most $2$ possible outcomes for $f(n-d-1,n-1)$ otherwise, we have
\begin{equation*}
\card{F_n^d(\bm i)} \begin{cases}
= 1 & \mathrm{if}~i_{n-d-1} \neq i_{n-1},\\
\leq 2 & \mathrm{if}~ i_{n-d-1} = i_{n-1},
\end{cases}
\end{equation*}
for all $d\in\{1,2,\ldots,n-1\}$, which, together with~\eqref{eq:Fndi}, finishes the proof.
\end{proof}

As an immediate consequence of the above Lemma we get that the difference between Kolmogorov-Sinai entropy and permutation entropy is not larger than $\log 2$.

\begin{cor}
Let $(\Omega,\B(\Omega),\mu,T)$ be a measure-preserving dynamical system with $\Omega\subseteq \R$. Let $T$ be countable piecewise monotone and $\M$ a countable partition into monotony intervals of $T$. If $H(\M)<\infty$ holds true, then
\[ \PE(T)\leq h(T) + \log 2. \]
\label{cor:nonErgodic}
\end{cor}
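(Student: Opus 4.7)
The plan is to combine Lemma~\ref{lem:error} and Lemma~\ref{lem:Sn} applied to the partition $\P = \M$. Lemma~\ref{lem:error} gives
\[ \PE(T) \leq h(T,\M) + \liminf_{n\to\infty}\frac{1}{n} \sum_{\bm i\in I^n} \mu(M(\bm i))\log(\card{S_n^{\M}(\bm i)}), \]
so the task reduces to showing that the liminf term is bounded by $\log 2$. Since $H(\M)<\infty$, we also have $h(T,\M)\leq h(T)$, which will yield the claimed inequality.

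The first step is to take logarithms in Lemma~\ref{lem:Sn} to get
\[ \log(\card{S_n^{\M}(\bm i)}) \leq \log 2 \cdot \card{\{s\in\{0,1,\ldots,n-2\}|~i_s=i_{n-1}\}}. \]
Integrating against $\mu(M(\bm i))$ and swapping the order of summation, the key quantity becomes
\[ \sum_{s=0}^{n-2} \sum_{\bm i\in I^n:\, i_s=i_{n-1}} \mu(M(\bm i)). \]
For fixed $s$ and fixed value $j\in I$ of the common index $i_s=i_{n-1}$, summing $\mu(M(\bm i))$ over the remaining coordinates collapses the intersection (because $\M$ is a partition) down to $\mu(T^{-s}(M_j)\cap T^{-(n-1)}(M_j))$. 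Summing over $j$ and using that the sets $T^{-s}(M_j)$ form a partition gives
\[ \sum_{j\in I}\mu(T^{-s}(M_j)\cap T^{-(n-1)}(M_j)) \leq \sum_{j\in I}\mu(T^{-s}(M_j)) = 1. \]

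Hence the double sum is bounded by $n-1$, so
\[ \frac{1}{n}\sum_{\bm i\in I^n} \mu(M(\bm i))\log(\card{S_n^{\M}(\bm i)}) \leq \frac{n-1}{n}\log 2, \]
and taking $\liminf_{n\to\infty}$ gives the bound $\log 2$. Combining with Lemma~\ref{lem:error} and $h(T,\M)\leq h(T)$ finishes the proof. No part of this argument looks genuinely hard; the only place to be careful is the bookkeeping in the partition-collapse step, where we must make sure that summing $\mu(M(\bm i))$ over the coordinates other than $i_s$ and $i_{n-1}$ really does leave only the joint two-time distribution, and invoke measure preservation (or simply the partition property of $\M$) to bound it by $1$.
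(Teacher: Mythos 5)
Your argument is correct and takes essentially the same route as the paper: apply Lemma~\ref{lem:error} with $\P=\M$, bound $\log(\card{S_n^{\M}(\bm i)})$ via Lemma~\ref{lem:Sn}, and use $h(T,\M)\leq h(T)$ (valid since $H(\M)<\infty$). The only difference is that the paper skips your sum-swapping and partition-collapse step entirely, observing instead that $\card{\{s\in\{0,\ldots,n-2\}|~i_s=i_{n-1}\}}\leq n-1$ holds pointwise, so $\log(\card{S_n^{\M}(\bm i)})\leq (n-1)\log 2$ and the weighted sum over $\bm i$ is at most $(n-1)\log 2$ directly.
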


\begin{proof}
We have
\[ \PE(T) \leq h(T,\M)+ \liminf_{n\to\infty}\frac{1}{n} \sum_{\bm i\in I^n} \mu(M(\bm i))\log(\card{S_n^{\M}(\bm i)}) \]
according to Lemma~\ref{lem:error}. Lemma~\ref{lem:Sn} provides
\[ \card{S_n^{\M}(\bm i)}\leq2^{\card{ \{s\in \{0,1,...,n-2\}|~i_s = i_{n-1}\} }} \leq 2^{n-1} \]
for all $n\in \N$. Combining the above statements yields
\begin{align*}
\PE(T) & \leq h(T,\M)+ \liminf_{n\to\infty}\frac{1}{n} \sum_{\bm i\in I^n} \mu(M(\bm i))(n-1)\log 2\\
&= h(T,\M)+ \liminf_{n\to\infty} (n-1)/n\log 2 = h(T,\M)+ \log 2\\
&\leq h(T) + \log 2. \qedhere
\end{align*}
\end{proof}

In order to give a smaller upper bound than $\log 2$, we have to put more effort into choosing our partition into monotony intervals $\M$.\\
Notice that a partition into monotony intervals of $T$ is not unique. For every countable partition $\M$ into monotony intervals and and every countable partition $\mathcal Q$ into intervals (but not necessarily into intervals of monotony) the partition $\M \vee \mathcal Q$ is a countable partition into monotony intervals as well.

%

\subsection{Rokhlin towers}
\label{sec:rokhlinTowers}

We want to construct partitions $\mathcal Q=\{Q_j\}_{j\in J}$ so that the expected value of $\card{ \{s\in \{0,1,...,n-2\}|~j_s = j_{n-1}\} }/n$ can be made small if $n$ goes to infinity. For $\card{ \{s\in \{0,1,...,n-2\}|~j_s = j_{n-1}\} }/n$ to be small, we try to construct the sets $Q_j$ in such a way that the iterates $T^{ s}(\omega)$ cannot stay in the same set $Q_j$ too frequently.\\
In the case of ergodic maps $T$, Birkhoff's ergodic theorem provides that the number of iterates $T^{s}(\omega)$ that are element of the set $Q_j$ is proportional to the measure of $Q_j$. So, by making the measure of $Q_j$ smaller for all $j\in J$ (which, consequently, increases the number of sets in $\mathcal Q$) we decrease the expected value of $\card{ \{s\in \{0,1,...,n-2\}|~j_s = j_{n-1}\} }/n$. Finally, one can use a similar approach to the one used in the proof of Theorem~\ref{thm:mainThm} to show the equality of permutation and KS-entropy. Since the ergodic case is contained in the general case, we do not show this here explicitly.\\
If $T$ is not ergodic, just making the measure of $Q\in \mathcal Q$ small is not enough any more, the sets $Q\in \mathcal Q$ need to be constructed in a specific  way. This is done in theorem~\ref{lem:nonErgodic} by using Rokhlin towers.

\begin{lem}[Rokhlin Lemma \cite{heinemann2000}]
Let $\Omega$ be a separable metric space,\\
$(\Omega,\B(\Omega),\mu,T)$ a measure-preserving dynamical system and $T$ aperiodic. Then for all $d\in \N$ and $\epsilon >0$ there exists a set $B\in\B(\Omega)$ with
\begin{enumerate}
\item $T^{-i}(B)\cap T^{-j}(B) = \emptyset$ for all $i,j\in\{0,...,d-1\}$ with $i\neq j$,
\item $\mu(B) \geq \frac{1-\epsilon}{d}$.
\end{enumerate}
The sequence of sets $(B,T^{-1}(B),\ldots,T^{-d+1}(B))$ is called Rokhlin tower of height $d$ with base $B$.
\label{lem:rokhlin}
\end{lem}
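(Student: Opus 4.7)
The plan is to carry out the classical Kakutani skyscraper construction: exhibit a small ``base'' $A \subset \Omega$ with $\mu(A) < \epsilon/d$ that almost every orbit visits, slice the space into columns over $A$ by first-return times, and take $B$ to be the bottom of every block of $d$ consecutive levels in each column.

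First I would establish the existence of such a sweep-out set: using the aperiodicity of $T$ together with the separability of $\Omega$, one can find $A \in \B(\Omega)$ with $\mu(A) < \epsilon/d$ such that $\bigcup_{n \geq 0} T^{-n}(A)$ has full $\mu$-measure. In the ergodic case this is immediate from Birkhoff's theorem applied to any set of the required measure; in general one invokes the ergodic decomposition (available since $\Omega$ is separable metric) or a Halmos-type exhaustion argument. Then for $\mu$-a.e.\ $\omega \in A$, Poincar\'e recurrence makes the first-return time $r(\omega) := \min\{n \geq 1 : T^n(\omega) \in A\}$ finite, and I would partition $A = \bigsqcup_{k \geq 1} A_k$ with $A_k := \{r = k\}$. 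Working modulo null sets (or, to handle non-invertibility, after passing to the natural extension), the tower levels $T^i(A_k)$ for $k \geq 1$ and $0 \leq i < k$ partition $\Omega$, and Kac's identity gives $\sum_k k\mu(A_k) = 1$.

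Next I would define
\[
B := \bigsqcup_{k \geq d} \bigsqcup_{j = 0}^{\lfloor k/d \rfloor - 1} T^{jd}(A_k),
\]
the bases of every block of $d$ consecutive levels in each column. For $0 \leq i < d$, the intersection of $T^{-i}(B)$ with the column over $A_k$ consists of levels whose index is congruent to $-i$ modulo $d$; since distinct $i, i' \in \{0,\ldots,d-1\}$ produce different residues, the sets $T^{-i}(B)$ are pairwise disjoint, verifying condition~1. For condition~2, a short computation gives
\[
d\mu(B) = \sum_k d\lfloor k/d \rfloor \mu(A_k) = \sum_k k\mu(A_k) - \sum_k (k \bmod d)\mu(A_k) \geq 1 - d\mu(A) > 1 - \epsilon.
\]
The main obstacle will be the first step, the construction of the small sweep-out set: in the purely measurable (non-ergodic) setting this is the genuinely nontrivial content of the Rokhlin lemma, requiring either the ergodic decomposition or a careful maximality argument exploiting aperiodicity. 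Once the sweep-out set is in hand, the column partition and the arithmetic-progression construction of $B$ are routine.
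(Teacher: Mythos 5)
The paper does not prove this lemma at all: it is quoted from the cited reference (Heinemann and Schmitt, on Rokhlin's lemma for non-invertible maps), so there is no internal proof to compare yours against. Judged on its own, your proposal is the standard Kakutani skyscraper argument, and it is essentially correct for an \emph{invertible} aperiodic transformation of a Lebesgue space: the column decomposition by first-return times, the choice of $B$ as every $d$-th level of each column, the residue-mod-$d$ argument for disjointness, and the Kac-identity computation $d\mu(B)\geq 1-d\mu(A)>1-\epsilon$ are all sound in that setting.

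The genuine gap is non-invertibility, which is the whole point of the version stated here: the paper applies the lemma to maps like the Gauss function, which is far from injective, and the conclusion is deliberately phrased in terms of preimages $T^{-i}(B)$. Your tower levels are forward images $T^{i}(A_k)$, and your measure computation silently uses $\mu(T^{jd}(A_k))=\mu(A_k)$; for a non-invertible measure-preserving map neither step is available, since forward images of Borel sets need not be Borel, measure is preserved only under preimages, and the levels $T^{i}(A_k)$ need not be pairwise disjoint even when the $A_k$ are. Your parenthetical fix of passing to the natural extension does not close this: the invertible Rokhlin lemma applied in the natural extension $\widehat\Omega$ produces a base $\widehat B$ that is in general not of the form $\pi^{-1}(B)$ for the factor map $\pi:\widehat\Omega\rightarrow\Omega$, so it does not descend to a set $B\subseteq\Omega$ whose preimages $T^{-i}(B)$ are disjoint; arranging the base to be measurable with respect to the factor $\sigma$-algebra is precisely the nontrivial content of the cited reference. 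A second, smaller gap is the one you flag yourself: the existence of a sweep-out set of measure less than $\epsilon/d$ in the non-ergodic setting is asserted but not constructed. As written, the proposal establishes the invertible Rokhlin lemma modulo the sweep-out set, not the statement actually needed here.
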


Such towers will turn out to be very useful because
\begin{equation}
\card{ \{s\in \{0,1,...,n-2\}|~T^{ s}(\omega) \in B\} } \leq \frac{n-2}{d}+1
\label{eq:RH-tower}
\end{equation}
is true for all $\omega\in\Omega$ if $B$ is a base of a Rokhlin tower of height $d$. We will show in the proof of Theorem~\ref{thm:mainThm} by using Lemma~\ref{lem:error} and \ref{lem:Sn} that, after dividing by $n$ and taking $n \to \infty$, inequality~\ref{eq:RH-tower} can be used to find an upper bound on the difference between Kolmogorov-Sinai and permutation entropy. Since we can construct Rokhlin tower of arbitrary height $d\in \N$, we can make this upper bound arbitrarily small by increasing $d$.\\
However, we cannot use $B,T^{-1}(B),\ldots,T^{-d+1}(B)$ directly to construct a countable partition $\mathcal Q$ of $\Omega$ into intervals because the sets\\ $B,T^{-1}(B),\ldots,T^{-d+1}(B)$ are generally not intervals. We need our partition $\mathcal Q$ to consist of intervals if we want to relate this partition to the ordinal partition $\OP{n}$ because two disjoint intervals are characterized by the fact that all elements in one interval are smaller than every element in the other interval. This fact does not need to be true any more if the sets $Q\in \mathcal Q$ are disconnected.\\
So the sets $B,T^{-1}(B),\ldots,T^{-d+1}(B)$ have to be approximated by sets of disjoint intervals, which is done with the help of the following two lemmas.

\begin{lem}[Approximation of Borel sets]
Let $(\Omega,\B(\Omega),\mu)$ be a probability space with $\Omega\subseteq \R$ and $\mu$ being a regular measure. Then for all $B\in \B(\Omega)$ and $\epsilon>0$, there exists $n\in\N$ and a finite number of pairwise disjoint intervals $A_i, i\in\{1,\ldots,n\}$, such that
\begin{equation*}
\mu\left(\left(\bigcup_{i=1}^n A_i\right)\triangle B \right) \leq \epsilon
\end{equation*}
holds true, where $\mu(A\triangle B) = \mu(A\setminus B)+\mu(B\setminus A)$ is the symmetric difference between sets $A$ and $B$.
\label{lem:approx}
\end{lem}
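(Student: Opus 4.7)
The plan is to use outer regularity of $\mu$ together with the classical fact that every open subset of $\mathbb{R}$ is a countable disjoint union of open intervals, then truncate that union to a finite one using the convergence of a non-negative series.

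More concretely, I would first apply outer regularity to produce an open set $U \subseteq \mathbb{R}$ with $B \subseteq U \cap \Omega$ and $\mu(U \cap \Omega \setminus B) \leq \epsilon/2$. Second, I would invoke the standard structure theorem for open subsets of $\mathbb{R}$ to write $U = \bigsqcup_{i=1}^\infty I_i$ as a (at most) countable, pairwise disjoint union of open intervals $I_i \subseteq \mathbb{R}$. Since $\mu$ is a probability measure on $\mathcal{B}(\Omega)$, countable additivity gives
\[ \sum_{i=1}^\infty \mu(I_i \cap \Omega) = \mu(U \cap \Omega) \leq 1 < \infty, \]
so there exists $n \in \mathbb{N}$ with $\sum_{i>n} \mu(I_i \cap \Omega) \leq \epsilon/2$.

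Third, I would set $A_i := I_i$ for $i \in \{1,\dots,n\}$; these are pairwise disjoint intervals in $\mathbb{R}$, hence lie in $\mathcal{B}(\Omega)$ after intersection with $\Omega$. Writing $A := \bigcup_{i=1}^n A_i$, I decompose the symmetric difference into $A \setminus B$ and $B \setminus A$ and estimate each piece separately: since $A \cap \Omega \subseteq U \cap \Omega$, we have $A \setminus B \subseteq (U \cap \Omega) \setminus B$, giving $\mu(A \setminus B) \leq \epsilon/2$; and since $B \subseteq U \cap \Omega = A \cup \bigcup_{i>n}(I_i \cap \Omega)$, we have $B \setminus A \subseteq \bigcup_{i>n}(I_i \cap \Omega)$, giving $\mu(B \setminus A) \leq \epsilon/2$. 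Adding the two bounds yields $\mu(A \triangle B) \leq \epsilon$.

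There is no real obstacle here; the lemma is a textbook approximation result. The only point requiring a bit of care is to make sure that "open" is interpreted consistently (open in $\mathbb{R}$ vs.\ open in $\Omega$) so that the structure theorem for open subsets of $\mathbb{R}$ applies cleanly, and that the resulting intervals $A_i$ are regarded as elements of $\mathcal{B}(\Omega)$ via their intersection with $\Omega$ when computing measures and the symmetric difference.
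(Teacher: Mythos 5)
Your proposal is correct and follows essentially the same route as the paper: outer regularity gives an open superset of $B$ within $\epsilon/2$, the structure theorem for open subsets of $\R$ gives a disjoint countable union of intervals, and convergence of the measure series lets you truncate to finitely many with tail mass at most $\epsilon/2$. The only cosmetic difference is that you bound $\mu(A\setminus B)$ and $\mu(B\setminus A)$ separately where the paper invokes the triangle inequality for the symmetric difference, and your explicit care about ``open in $\R$'' versus ``open in $\Omega$'' is a sound (if minor) refinement of the paper's phrasing.
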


\begin{proof}
Take $B\in \B(\Omega)$ and $\epsilon>0$. Because $\mu$ is regular, we can find an open set $O \in \B(\Omega)$ with $O \supseteq B$ and
\[ \mu(O\triangle B) = \mu(O \setminus B) \leq \frac{\epsilon}{2}.\]
Since $O$ is open, there exists a countable collection of pairwise disjoint open intervals $A_i$, $i\in \N$, with $O = \bigcup_{i\in \N} A_i$. Choose $n\in \N$ so that
\[ \mu\left(\left(\bigcup_{i=1}^n A_i\right)\triangle O \right) = \mu\left(\bigcup_{i=n+1}^{\infty}A_i\right) \leq \frac{\epsilon}{2} \]
is true. This implies with the triangle inequality
\begin{align*}
&\mu\left(\left(\bigcup_{i=1}^n A_i\right)\triangle B \right) \leq \mu\left(\left(\bigcup_{i=1}^n A_i\right)\triangle O \right) + \mu\left(O \triangle B \right)\\
& \leq \mu\left(\bigcup_{i=n+1}^\infty A_i \right) + \frac{\epsilon}{2} \leq \epsilon.\qedhere
\end{align*}
\end{proof}

So we can approximate the base $B$ of a Rokhlin tower by a set of disjoint interval $A_i, i=1,\ldots,n$ with arbitrary precision. Since $T$ is measure-preserving, we can also approximate $T^{-k}(B)$ by the sets $T^{-k}(A_i)$ but these sets do not need to be intervals any more. However, one can use the piecewise monotony of $T$ to show that $T^{-k}(A_i)$ can be written as a finite or countable union of intervals:\\

%

For a given (countable) piecewise monotone map $T:\Omega\rightarrow\Omega$, one can easily see that $T^{-1}(A)$ is a finite (or countable) union of intervals for every interval $A\subseteq \Omega$. In fact, if $A$ is an interval and $\M$ the partition into monotony intervals of $T$, the set $T^{-1}(A) \cap M$ is an interval for every $M  \in \M$. This implies that for every partition $\P$ into intervals the partition $\P':=T^{-1}(\P)\vee \M$ will be a partition into intervals. Analogously,
\[ T^{-1}(\P') \vee \M = T^{-2}(\P) \vee T^{-1}(M) \vee \M \]
will be a partition into intervals as well. Repeating this argument provides that
\[ T^{-k}(\P) \vee \left(\bigvee_{l=0}^{k-1}T^{-l}(\M)\right) \]
is a partition into intervals for all $k\in \N$. Since the intersection of two intervals is an interval again,
\begin{equation}
\left(\bigvee_{k=1}^{d-1}T^{-k}(\P)\right)\vee \left(\bigvee_{k=0}^{d-2}T^{-k}(\M)\right)
\label{eq:intervalPartition}
\end{equation}
is a partition into intervals for all $d\in \N$.\\

Applying Lemma~\ref{lem:approx} and using the piecewise monotony of $T$ as explained above provides us with many sets of disjoint intervals. In the proof of the following lemma, we combine those intervals into a countable partition $\P=\{P_i\}_{i\in I}$ of disjoint intervals. We are then interested in specific elements $A \in \sigma(\P)$, where $\sigma(\P)$ is the smallest $\sigma$-algebra containing all sets $P\in\P$. Since $\P$, as well as the index set $I$, is countable and all sets $P\in \P$ are pairwise disjoint, we can explicitly state $\sigma(\P)$ as
\[ \sigma(\P)= \left\{\bigcup_{j\in J}P_j|~J\subseteq I\right\}. \]
In particular, this implies that every set $A\in\sigma(\P)$ can be expressed as
\[ A = \bigcup_{i \in I_A}P_i \]
with a unique countable index set $I_A\subseteq I$. We denote by
\[ split(A|\P) := \{P_i\}_{i\in I_A} \]
the countable collection of sets in $\P$ into which $A$ is split.

%
%

\begin{thm}
Let $(\Omega,\B(\Omega),\mu,T)$ be a measure-preserving dynamical system with $\Omega\subseteq \R$ being a compact metric space. Let $T$ be aperiodic, countable piecewise monotone and $\M$ a countable partition into monotony intervals of $T$. Then for all $\epsilon>0$ and $d\in \N$ there exists a partition $\mathcal Q=\{Q_j\}_{j\in J}$ of $\Omega$ and an index set $\widetilde J\subseteq J$, such that
\begin{enumerate}[label=(\roman*)]
\item $\mathcal Q$ consist of countably many intervals,
\label{lem:nonErgodic:1}
\item $Q_j\cap T^{-k}(Q_j)=\emptyset$ for all $k\in\{1,2,\ldots d-1\}$ and $j \in \widetilde J$,
\label{lem:nonErgodic:4}
\item $H(\M)<\infty$ implies $H(\mathcal Q)<\infty$,
\label{lem:nonErgodic:5}
\item and $\mu(\bigcup_{j \in \widetilde J} Q_j)\geq1-\epsilon$.
\label{lem:nonErgodic:3}
\end{enumerate}
\label{lem:nonErgodic}
\end{thm}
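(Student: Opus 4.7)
The plan is to invoke the Rokhlin Lemma at a \emph{height $D$ substantially larger than $d$} so as to avoid the boundary effect at the top of the tower, where the $D$-step itinerary runs out. Concretely, first pick an integer $D\geq d$ with $(d-1)/D<\epsilon/3$. Lemma~\ref{lem:rokhlin} then yields a set $B\in\B(\Omega)$ with pairwise disjoint preimages $T^{-0}(B),\ldots,T^{-(D-1)}(B)$ and $\mu(B)\geq(1-\delta_{1})/D$ for some small $\delta_{1}>0$ to be fixed later. Since $B$ need not be a union of intervals, approximate it via Lemma~\ref{lem:approx} by a finite union $A=A_{1}\sqcup\ldots\sqcup A_{N}$ of pairwise disjoint intervals with $\mu(A\triangle B)\leq\delta_{2}$. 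Using compactness of $\Omega$, the complement $A^{c}:=\Omega\setminus A$ decomposes into finitely many intervals, so the collection $\mathcal P_{A}$ consisting of the $A_{i}$ together with the pieces of $A^{c}$ is a finite partition of $\Omega$ into intervals.

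Next, set
\[
\mathcal Q:=\left(\bigvee_{k=0}^{D-1}T^{-k}(\mathcal P_{A})\right)\vee\left(\bigvee_{k=0}^{D-2}T^{-k}(\M)\right).
\]
The inductive argument leading to \eqref{eq:intervalPartition} shows that $\mathcal Q$ is a countable partition of $\Omega$ into intervals, settling~\ref{lem:nonErgodic:1}. Subadditivity of Shannon entropy combined with the $T$-invariance of $\mu$ gives $H(\mathcal Q)\leq D\log\card{\mathcal P_{A}}+(D-1)H(\M)$, which is finite whenever $H(\M)<\infty$, settling~\ref{lem:nonErgodic:5}.

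For the remaining two properties, set $R_{k_{0}}:=T^{-k_{0}}(A)\cap\bigcap_{k\in\{0,\ldots,D-1\}\setminus\{k_{0}\}}T^{-k}(A^{c})$ for $k_{0}\in\{0,\ldots,D-1\}$, and let $\widetilde J$ be the set of those $j\in J$ such that $Q_{j}\subseteq R_{k_{0}}$ for some $k_{0}\in\{d-1,\ldots,D-1\}$. Property~\ref{lem:nonErgodic:4} follows from a short itinerary argument: if $\omega\in Q_{j}\cap T^{-k'}(Q_{j})$ with $j\in\widetilde J$ witnessed by $k_{0}$ and $k'\in\{1,\ldots,d-1\}$, then $\omega\in Q_{j}\subseteq T^{-k_{0}}(A)$ forces $T^{k_{0}}(\omega)\in A$, while $T^{k'}(\omega)\in Q_{j}\subseteq T^{-(k_{0}-k')}(A^{c})$ (valid because $k_{0}-k'\in\{0,\ldots,D-1\}\setminus\{k_{0}\}$, since $1\leq k'\leq d-1\leq k_{0}\leq D-1$) forces $T^{k_{0}}(\omega)\in A^{c}$, a contradiction. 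For~\ref{lem:nonErgodic:3}, the $R_{k_{0}}$ are pairwise disjoint, and each differs from the tower level $T^{-k_{0}}(B)$ in measure by at most $\sum_{k=0}^{D-1}\mu(T^{-k}(A\triangle B))=D\delta_{2}$, by $T$-invariance. Hence
\[
\mu\!\left(\bigcup_{j\in\widetilde J}Q_{j}\right)\geq(D-d+1)\!\left(\frac{1-\delta_{1}}{D}-D\delta_{2}\right)\geq\left(1-\tfrac{d-1}{D}\right)(1-\delta_{1})-D^{2}\delta_{2},
\]
which can be made larger than $1-\epsilon$ by choosing $\delta_{1}$ small and $\delta_{2}\ll 1/D^{2}$. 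The main technical obstacle is precisely this three-way balance: the height $D$ must be large enough for $(d-1)/D$ to be negligible, while $\delta_{2}$ must be correspondingly small in order to absorb the factor $D$ that arises when the approximation error $\mu(A\triangle B)$ is pulled back through $D$ levels of the tower.
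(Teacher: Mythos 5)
Your construction is correct, and it leans on the same three ingredients as the paper's proof --- the Rokhlin Lemma, the approximation of the base by finitely many disjoint intervals (Lemma~\ref{lem:approx}), and the observation around \eqref{eq:intervalPartition} that refining by $\M$ and by preimages preserves the interval structure --- but it organizes the tower bookkeeping differently. The paper takes the tower height to be exactly $d$ and enforces property (ii) by \emph{trimming the approximating set}: each $A_i$ is replaced by $\widehat A_i = A_i\setminus\bigl(\bigcup_{k=1}^{d-1}\bigcup_j T^{-k}(A_j)\bigr)$, which makes the sets $T^{-l}(\widehat A_i)$, $l=0,\ldots,d-1$, pairwise disjoint outright; the cost of the trimming is then controlled by a fairly long computation showing $\mu(\bigcup_i\widehat A_i)\geq(1-\epsilon)/d$. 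You instead leave the approximation $A$ untouched, build a \emph{taller} tower of height $D$ with $(d-1)/D<\epsilon/3$, and enforce (ii) by selecting only those atoms whose $D$-step itinerary visits $A$ exactly once, at a time $k_0\geq d-1$; your itinerary argument for (ii) is sound, since $k_0-k'\in\{0,\ldots,D-1\}\setminus\{k_0\}$ for $1\leq k'\leq d-1\leq k_0$. The price is the loss of the bottom $d-1$ tower levels (absorbed by taking $D$ large) and the factor $D$, respectively $D^2$, multiplying the approximation error (absorbed by taking $\delta_2\ll 1/D^2$); the three-way balance you describe is consistent because $D$ is fixed before $\delta_2$ is chosen. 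What each approach buys: the paper's version keeps the auxiliary partition tied to the given $d$ and gets a cleaner identity $\bigcup_{j\in\widetilde J}Q_j=\bigcup_{l,i}T^{-l}(\widehat A_i)$, at the expense of the $\widehat A_i$ measure estimate; yours avoids that estimate entirely but introduces the extra parameter $D$ and a larger partition, hence a cruder (but still finite) bound $H(\mathcal Q)\leq D\log\card{\mathcal P_A}+(D-1)H(\M)$, which is all that (iii) requires. One small point worth making explicit: the finitely many pieces of $\Omega\setminus A$ are intervals of $\Omega$ only in the relative sense, i.e.\ intersections of $\Omega$ with intervals of $\R$ --- exactly how the paper treats its sets $R_i=\widetilde R_i\cap\Omega$ --- and this relative notion is all that the later order-comparability arguments need, so your appeal to \eqref{eq:intervalPartition} goes through.
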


\begin{proof}
Take $\epsilon>0$ and $d\in \N$. According to Lemma~\ref{lem:rokhlin}, there exists a set $B\in \B(\Omega)$ with
\begin{equation}
B \cap T^{-k}(B) = \emptyset
\label{eq:BEmpty}
\end{equation}
for all $k\in\{1,2,\ldots,d-1\}$ and $\mu(B)\geq\frac{1-\epsilon/2}{d}$. Since any Borel probability measure on a compact metric space is regular \cite{parthasarathy1967}, we can apply Lemma~\ref{lem:approx} to $B$, which provides the existence of a finite number of disjoint intervals $A_i, i\in\{1,\ldots,n\}$ such that
\begin{equation}
\mu\left(\left(\bigcup_{i=1}^n A_i\right)\triangle B \right) \leq \frac{\epsilon}{6d^2}.
\label{eq:epsilonH}
\end{equation}
Consider $\widetilde \Omega:= [\inf \Omega, \sup \Omega]$ and $\widetilde R:=\widetilde \Omega\setminus\bigcup_{i=1}^nA_i$. Because $\widetilde \Omega$ and $A_i$ are intervals for all $i\in \{1,\ldots,n\}$, there exists $m\in \N$ and intervals $\widetilde R_i\subseteq \widetilde \Omega$ with
\[ \widetilde R= \bigcup _{i=1}^{m} \widetilde R_i. \]
Define $R_i:= \widetilde R_i \hspace{0.025cm}\cap \hspace{0.025cm}\Omega$ for all $i\in \{1,\ldots,m\}$. Since $M\in \M$ and $\widetilde R_i$ are intervals,
\[ M\cap R_i = M \cap (\widetilde R_i \cap \Omega) = (M \cap \Omega) \cap \widetilde R_i =  M \cap \widetilde R_i \]
is an interval for all $i \in \{1,\ldots,m\}$ and $M\in \M$. So $\{R_i\}_{i=1}^m \vee \M$ is a countable collection of intervals and $\{A_i\}_{i=1}^n \cup \{R_i\}_{i=1}^m \vee \M$ a countable partition of $\Omega$ into intervals.\\
Consider the partition
\begin{equation*}
\P:=\left(\bigvee_{l=0}^{d-1}T^{-l} \left(\bigvee_{k=0}^{d-1}T^{-k}\left(\{A_i\}_{i=1}^n \cup \{R_i\}_{i=1}^m \vee \M \right)  \right)\right)\vee \left( \bigvee_{k=0}^{2d-3}T^{-k}(\M) \right).
\end{equation*}
It follows from \eqref{eq:intervalPartition} that $\P$ is a countable partition into intervals.\\
Now define
\[ \widehat A_i := A_i\setminus \left(\bigcup_{k=1}^{d-1}\bigcup_{j=1}^{n} T^{-k}(A_j) \right) \]
for $i\in \{1,2,\ldots,n\}$.\\
For all $u,v\in \{1,2,\ldots,n\}$ and $k,l\in \{0,1,\ldots,d-1\}$ with $k < l$ we have
\begin{align*}
& T^{-k}(\widehat A_u) \cap T^{-l}(\widehat A_v) = T^{-k}(\widehat A_u \cap T^{-(l-k)}(\widehat A_v))\\
\subseteq& T^{-k}\left( \left(A_u\setminus \left(\bigcup_{s=0}^{d-1} \bigcup_{j=1}^n T^{-s}(A_j) \right)\right) \cap T^{-(l-k)}(A_v)\right)\\
\subseteq& T^{-k}\left( T^{-(l-k)}(A_v)\setminus \left(\bigcup_{s=0}^{d-1} \bigcup_{j=1}^n T^{-s}(A_j) \right) \right) = \emptyset.
\end{align*}
For $k = l$ but $u \neq v$
\[ T^{-k}(\widehat A_u) \cap T^{-k}(\widehat A_v) = T^{-k}(\widehat A_u \cap \widehat A_v) \subseteq T^{-k}(A_u \cap A_v) = \emptyset\]
holds true. So
\begin{equation}
T^{-k}(\widehat A_u) \cap T^{-l}(\widehat A_v) = \emptyset
\label{eq:hatA_iDisjoint}
\end{equation}
is fulfilled for all $u,v\in \{1,2,\ldots,n\}$ and $k,l\in \{0,1,\ldots,d-1\}$ with $u \neq v$ or $k\neq l$.\\
We have
\begin{equation*}
T^{-l}(\widehat A_i) \in \sigma(\P)
\end{equation*} for all $i\in \{1,2,\ldots n\}$ and $l\in \{0,1,\ldots,d-1\}$. This implies
\[ \widehat R:= \Omega \setminus \left(\bigcup_{l=0}^{d-1}  \bigcup_{i=1}^n T^{-l}(\widehat A_i)\right) \in \sigma(\P). \]
Since $\P$ is a countable partition into intervals, $T^{-l}(\widehat A_i)$ and $\widehat R$ can be expressed as the union of countably many intervals $P\in\P$. The partition $\mathcal Q$ that consists of those intervals is defined as
\begin{equation*}
\mathcal Q:=~ \{Q_j\}_{j\in J}:=~ split\left(\widehat R|\P\right)~\cup~\bigcup_{l=0}^{d-1}  \bigcup_{i=1}^n~ split\left(T^{-l}(\widehat A_i)|\P\right).
\end{equation*}
The collection of sets $\mathcal Q$ is indeed a partition of $\Omega$ because\\
$\{\widehat R\} \cup \bigcup_{l=0}^{d-1} \bigcup_{i=1}^n \{T^{-l}(\widehat A_i)\}$ is a partition of $\Omega$. Notice that, because $\P$ is a countable partition into intervals and $d$ is finite, $\mathcal Q$ is a countable partition into intervals as well. So \ref{lem:nonErgodic:1} is fulfilled.\\
Choose
\[ \widetilde J:=\{j\in J|~Q_j\subseteq \bigcup_{l=0}^{d-1} \bigcup_{i=1}^n T^{-l}(\widehat A_i) \} \]
and take $j \in J_0$. Then there exists a set $\widehat A_i$ and $l\in \{0,1,\ldots,d-1\}$ with $Q_j\subseteq T^{-l}(\widehat A_i)$. So for each $k\in \{1,\ldots,d-1\}$ we have
\begin{equation*}
Q_j\cap T^{-k}(Q_j) \subseteq T^{-l}(\widehat A_i)\cap T^{-k}(T^{-l}(\widehat A_i)) = T^{-l}(\widehat A_i \cap T^{-k}(\widehat A_i)) = \emptyset.
\end{equation*}
Therefore, \ref{lem:nonErgodic:4} is true.\\
Because $\sigma(\mathcal Q)\subseteq \sigma(\P)$, we have
\begin{align*}
&H(\mathcal Q) \leq H(\P)\\
&= H\left(\left(\bigvee_{l=0}^{d-1}T^{-l} \left(\bigvee_{k=0}^{d-1}T^{-k}\left(\{A_i\}_{i=1}^n \cup \{R_i\}_{i=1}^m \vee \M\right)  \right)\right)\vee \left( \bigvee_{k=0}^{2d-3}T^{-k}(\M) \right)\right)\\
&\leq H\left(\bigvee_{l=0}^{d-1}T^{-l} \left(\bigvee_{k=0}^{d-1}T^{-k}\left(\{A_i\}_{i=1}^n \cup \{R_i\}_{i=1}^m \vee \M\right)  \right)\right) + H \left( \bigvee_{k=0}^{2d-3}T^{-k}(\M) \right)\\
&\leq \sum_{l=0}^{d-1}H\left(T^{-l} \left(\bigvee_{k=0}^{d-1}T^{-k}\left(\{A_i\}_{i=1}^n \cup \{R_i\}_{i=1}^m \vee \M\right)  \right)\right) + \sum_{k=0}^{2d-3}H \left(T^{-k}(\M) \right)\\
&= d\cdot H\left(\bigvee_{k=0}^{d-1}T^{-k}\left(\{A_i\}_{i=1}^n \cup \{R_i\}_{i=1}^m \vee \M\right)  \right) + 2(d-1)\cdot H \left(\M \right)\\
&\leq d\cdot \sum_{k=0}^{d-1}H\left(T^{-k}\left(\{A_i\}_{i=1}^n \cup \{R_i\}_{i=1}^m \vee \M\right)  \right) + 2(d-1)\cdot H \left(\M \right)\\
&= d^2\cdot H\left(\{A_i\}_{i=1}^n \cup \{R_i\}_{i=1}^m \vee \M\right) + 2(d-1)\cdot H \left(\M \right)\\
&\leq d^2\cdot H\left(\left(\{A_i\}_{i=1}^n \cup \{R_i\}_{i=1}^m\right) \vee \M\right) + 2(d-1)\cdot H \left(\M \right)\\
&\leq d^2\cdot H\left(\{A_i\}_{i=1}^n \cup \{R_i\}_{i=1}^m\right) + (2(d-1)+d^2)\cdot H \left(\M \right)
\end{align*}
So \ref{lem:nonErgodic:5} holds true. It remains to show \ref{lem:nonErgodic:3}:\\
Using $\widehat A_i\subseteq A_i$ and the fact that the sets $A_i$ are pairwise disjoint provides
\begin{align*}
&\mu\left(\left(\bigcup_{i=1}^n A_i\right) \setminus \left(\bigcup_{i=1}^n \widehat A_i\right) \right) = \sum_{i=1}^n \mu(A_i\setminus \widehat A_i)\\
=& \sum_{i=1}^n \mu\left(A_i\cap \left(\bigcup_{k=1}^{d-1}\bigcup_{j=1}^{n} T^{-k}(A_j)\right)\right) = \sum_{i=1}^n \mu\left(\bigcup_{k=1}^{d-1} \bigcup_{j=1}^{n}(A_i\cap T^{-k}(A_j))\right)\\
=& \mu\left(\bigcup_{i=1}^n\bigcup_{k=1}^{d-1} \bigcup_{j=1}^{n}(A_i\cap T^{-k}(A_j))\right) \leq \sum_{k=1}^{d-1} \mu\left(\bigcup_{i=1}^n \bigcup_{j=1}^{n}(A_i\cap T^{-k}(A_j))\right)\\
\displaybreak
=& \sum_{k=1}^{d-1} \mu\left(\bigcup_{i=1}^n \left( A_i \cap T^{-k}\left(\bigcup_{j=1}^{n} A_j\right)\right)\right) = \sum_{k=1}^{d-1} \mu\left(\left(\bigcup_{i=1}^n A_i \right) \cap T^{-k}\left(\bigcup_{j=1}^{n} A_j\right)\right)\\
=& \sum_{k=1}^{d-1} \left[ \mu\left(\left(\left(\bigcup_{i=1}^n A_i \right)\cap B\right) \cap T^{-k}\left(\left(\bigcup_{j=1}^{n} A_j\right)\cap B\right)\right) \right. \\
&+ \mu\left(\left(\left(\bigcup_{i=1}^n A_i \right)\cap B\right) \cap T^{-k}\left(\left(\bigcup_{j=1}^{n} A_j\right)\setminus B\right)\right)\\
&+ \left. \mu\left(\left(\left(\bigcup_{i=1}^n A_i \right)\setminus B\right) \cap T^{-k}\left(\bigcup_{j=1}^n A_j\right)\right) \right]\\
\stackrel{\eqref{eq:BEmpty}}{\leq}& \sum_{k=1}^{d-1} \left[ \mu\left(T^{-k}\left(\left(\bigcup_{j=1}^{n} A_j\right)\setminus B \right)\right) + \mu\left(\left(\bigcup_{i=1}^{n} A_i\right)\setminus B \right) \right]\\
=& \sum_{k=1}^{d-1} 2\cdot\mu\left(\left(\bigcup_{i=1}^{n} A_i\right)\setminus B \right) 
= 2(d-1)\cdot\mu\left(\left(\bigcup_{i=1}^nA_i\right)\setminus B \right)\\
\leq& 2(d-1)\cdot\mu\left(\left(\bigcup_{i=1}^nA_i\right)\triangle B \right) \stackrel{\eqref{eq:epsilonH}}{\leq} \frac{2(d-1)\cdot\epsilon}{6 d^2} \leq \frac{\epsilon}{3d}.
\end{align*}
This implies
\begin{align*}
&\mu(B) \leq \mu\left(\bigcup_{i=1}^n A_i\right) + \mu\left(\left(\bigcup_{i=1}^n A_i\right)\triangle B\right) \stackrel{\eqref{eq:epsilonH}}{\leq} \mu\left(\bigcup_{i=1}^n A_i\right) + \frac{\epsilon}{6d^2}\\
&\leq \mu\left(\bigcup_{i=1}^n\widehat A_i\right) +\mu\left(\left(\bigcup_{i=1}^n A_i\right) \setminus \left(\bigcup_{i=1}^n \widehat A_i\right) \right) + \frac{\epsilon}{6d^2}\\
&\leq \mu\left(\bigcup_{i=1}^n\widehat A_i\right) +\frac{\epsilon}{3d} + \frac{\epsilon}{6d^2} \leq \mu\left(\bigcup_{i=1}^n\widehat A_i\right) +\frac{\epsilon}{3d} + \frac{\epsilon}{6d} = \mu\left(\bigcup_{i=1}^n\widehat A_i\right) +\frac{\epsilon}{2d},
\end{align*}
which is equivalent to
\begin{equation*}
\mu\left(\bigcup_{i=1}^n\widehat A_i\right) \geq \mu(B)-\frac{\epsilon}{2d} \geq \frac{1-\epsilon/2}{d} - \frac{\epsilon/2}{d} = \frac{1-\epsilon}{d}.
\end{equation*}
Hence
\begin{align*}
&\mu\left(\bigcup_{j \in \widetilde J} Q_j \right) = \mu\left(\bigcup_{l=0}^{d-1} \bigcup_{i=1}^n T^{-l}(\widehat A_i)\right) \stackrel{\eqref{eq:hatA_iDisjoint}}{=} \sum_{l=0}^{d-1} \mu\left(T^{-l}\left(\bigcup_{i=1}^n \widehat A_i\right)\right)\\
&= \sum_{l=0}^{d-1} \mu\left(\bigcup_{i=1}^n \widehat A_i\right) = d\cdot\mu\left(\bigcup_{i=1}^n \widehat A_i\right) \geq d\cdot \frac{1-\epsilon}{d} = 1-\epsilon.
\end{align*}
So \ref{lem:nonErgodic:3} is fulfilled.
\end{proof}

We now combine a partition $\mathcal Q$ as described in Lemma~\ref{lem:nonErgodic} and a partition $\M$ into monotony intervals of $T$ into the partition $\P = \M \vee \mathcal Q$. This partition combines the properties of $\mathcal Q$ and $\M$, i.e. we can apply Lemma~\ref{lem:Sn} to $\P$ as we could to $\M$ and the properties given in Lemma~\ref{lem:nonErgodic} are true for $\P$ as they were for $\mathcal Q$.

\begin{proof}[Proof of Theorem~\ref{thm:mainThm}]
The inequality $\PE(T)\geq h(T)$ follows from \eqref{eq:inequality}. We now have to show $\PE(T)\leq h(T)$.\\
Let $\M = \{M_i\}_{i \in I}$ be a finite or countable partition into monotony intervals of $T$ with $H(\M)<\infty$. For any $d\in \N$ choose a countable partition
\[ \mathcal Q_{d}: = \{Q^d_j\}_{j\in J_d} \]
of $\Omega$ into intervals and an index set $\widetilde J_d\subseteq J_d$ with
\begin{itemize}
\item $Q_j^d\cap T^{-k}(Q_j^d)=\emptyset$ for all $k\in\{1,2,\ldots d-1\}$ and $j \in \widetilde J_d$,
\item $H(\mathcal Q)<\infty$,
\item and $\mu(\bigcup_{j \in \widetilde J_d} Q_j^d)\geq1-\frac{1}{d}$.
\end{itemize}
According to Theorem~\ref{lem:nonErgodic}, this is always possible. Consider the partition
\[ \P_{d} := \M \vee \mathcal Q_{d} = \{M_i\cap Q_j^d\}_{(i,j)\in I\times J_d}.\]
Applying Lemma~\ref{lem:error} to $\P_{d}$ yields
\begin{equation}
\PE(T) \leq h(T,\P_{d})+ \liminf_{n\to\infty}\frac{1}{n} \sum_{(\bm i,\bm j)\in (I\times J_d)^n} \mu(P^{d}((\bm i,\bm j)))\log(\card{S_n^{\P_{d}}((\bm i,\bm j))}),
\label{eq:2combi1}
\end{equation}
where we consider $(\bm i,\bm j)$ itself as one multi index and $I\times J_d$ as one index set. Note that $\P_{d}$ is a countable partition into monotony intervals of $T$ for all $d\in\N$. Therefore, we can apply Lemma~\ref{lem:Sn} to $\card{S_n^{\P_{d}}((\bm i,\bm j))}$, which yields
\begin{align*}
&\sum_{(\bm i,\bm j)\in (I\times J_d)^n} \mu(P^{d}((\bm i,\bm j)))\log(\card{S_n^{\P_{d}}((\bm i,\bm j))})\\
\leq \log 2 &\sum_{(\bm i,\bm j)\in (I\times J_d)^n} \mu(P^{d}((\bm i,\bm j)))\card{ \{s\in \{0,1,...,n-2\}|~(i_s,j_s) = (i_{n-1},j_{n-1})\} }\\
\leq \log 2 &\sum_{(\bm i,\bm j)\in (I\times J_d)^n} \mu(P^{d}((\bm i,\bm j)))\card{ \{s\in \{0,1,...,n-2\}|~j_s = j_{n-1}\} }\\
= \log 2 &\sum_{\bm i \in I^n}\sum_{\bm j \in J_d^n} \mu(P^{d}((\bm i,\bm j)))\card{ \{s\in \{0,1,...,n-2\}|~j_s = j_{n-1}\} }\\
= \log 2 &\sum_{\bm j \in J_d^n} \mu(Q^{d}(\bm j))\card{ \{s\in \{0,1,...,n-2\}|~j_s = j_{n-1}\} }. \numberthis
\label{eq:2combi2}
\end{align*}
We have $H(\P_d) \leq H(\M) + H(\mathcal Q_d)<\infty$ for all $d\in\N$ and, consequently,
\begin{equation}
\liminf_{d\to \infty} h(T,\P_{d}) \leq h(T).
\label{eq:2combi3}
\end{equation}
Combining \eqref{eq:2combi1},\eqref{eq:2combi2} and \eqref{eq:2combi3} yields
\begin{equation*}
 \PE(T) \leq h(T) + \log 2\liminf_{d\to\infty}\liminf_{n\to\infty}\frac{1}{n} \sum_{\bm j \in J_d^n} \mu(Q^{d}(\bm j))\card{ \{s\in \{0,1,...,n-2\}|~j_s = j_{n-1}\} }.
\end{equation*}
So it remains to show that
\begin{equation}
\liminf_{d\to\infty}\liminf_{n\to\infty}\frac{1}{n} \sum_{\bm j \in J_d^n} \mu(Q^{d}(\bm j))\card{ \{s\in \{0,1,...,n-2\}|~j_s = j_{n-1}\}} = 0
\label{eq:2remains}
\end{equation}
is true.\\
We have
\begin{align*}
&\sum_{\bm j \in J_d^n} \mu(Q^{d}(\bm j))\card{ \{s\in \{0,1,...,n-2\}|~j_s = j_{n-1}\}}\\
=& \sum_{j_{n-1}\in J_d}\sum_{\bm j \in J_d^{n-1}} \mu(Q^{d}((\bm j,j_{n-1})))\card{ \{s\in \{0,1,...,n-2\}|~j_s = j_{n-1}\} }\\
=& \sum_{j_{n-1}\in \widetilde J_d}\sum_{\bm j \in J_d^{n-1}} \mu(Q^{d}((\bm j,j_{n-1})))\card{ \{s\in \{0,1,...,n-2\}|~j_s = j_{n-1}\} }\\
&+ \sum_{j_{n-1}\in J_d\setminus \widetilde J_d}\sum_{\bm j \in J_d^{n-1}} \mu(Q^{d}((\bm j,j_{n-1})))\card{ \{s\in \{0,1,...,n-2\}|~j_s = j_{n-1}\} }\\
\stackrel{\eqref{eq:RH-tower}}{\leq}& \sum_{j_{n-1}\in \widetilde J_d}\sum_{\bm j \in J_d^{n-1}} \mu(Q^{d}((\bm j,j_{n-1}))) \left(\frac{n-2}{d}+1\right)\\
&+ \sum_{j_{n-1}\in J_d\setminus \widetilde J_d}\sum_{\bm j \in J_d^{n-1}} \mu(Q^{d}((\bm j,j_{n-1})))(n-1)\\
\leq& \frac{n-2}{d}+1 + \sum_{j_{n-1}\in J_d\setminus \widetilde J_d} \mu( Q^d_{j_{n-1}})(n-1) \leq \frac{n-2}{d} + \frac{n-1}{d}+1 = \frac{2n-3}{d}+1
\end{align*}
for any $n,d\in \N$. This implies equality~\eqref{eq:2remains} and finishes the proof.
\end{proof}

\section{Discussion}
\label{sec:discussion}

\begin{rem}
The requirement of $\mu$ being aperiodic is not a significant restriction of the main result:\\
Consider the set of periodic points
\[ Per:= \bigcup_{t=1}^\infty\{\omega\in \Omega|~\omega=T^t(\omega) \}. \]
We can divide the measure $\mu$ into a periodic part $\mu_p$ with $\mu_p(A) := \frac{\mu(A\cap Per)}{\mu(Per)}$ and an aperiodic part $\mu_a$ with $\mu_a(A):=\frac{\mu(A\setminus Per)}{1-\mu(Per)}$ for all $A\in \A$. Since $Per$ is $\mu$-almost surely $T$-invariant, i.e. $\mu(Per\triangle T^{-1}(Per)) = 0$, both $\mu_p$ and $\mu_a$ are $T$-invariant measures. Therefore, $\mu=\mu(Per)\mu_p + (1-\mu(Per))\mu_a$ implies (see e.g. \cite{walters2000})
\begin{equation}
h(T,\mu) = \mu(Per)h(T,\mu_p) + (1-\mu(Per))h(T,\mu_a)
\label{eq:hAffin}
\end{equation}
By using the same arguments as in \cite{walters2000} for the proof of \eqref{eq:hAffin}, one can verify that
\begin{equation}
\PE(T,\mu) = \mu(Per)\PE(T,\mu_p) + (1-\mu(Per))\PE(T,\mu_a)
\label{eq:hPEAffin}
\end{equation}
holds true, where $h(T,\mu)$ and $\PE(T,\mu)$ denote the corresponding entropies of the dynamical system $(\Omega,\B(\Omega),\mu,T)$.\\
We can apply our main theorem to $h(T,\mu_a)$ and $\PE(T,\mu_a)$ because $T$ is aperiodic with regards to $\mu_a$. Since the dynamics of periodic points are determined by a finite number of iterations, on can show that $h(T,\mu_p) = \PE(T,\mu_p) = 0$ is true. Combining \eqref{eq:hAffin} and \eqref{eq:hPEAffin} yields
\[ \PE(T,\mu) = (1-\mu(Per)) \PE(T,\mu_a) = (1-\mu(Per)) h(T,\mu_a) = h(T,\mu). \]
\end{rem}

\paragraph{Continuing example~\ref{ex:gaussMap}}
In order to apply Theorem~\ref{thm:mainThm} to the Gauss function $T$, we have to check if $H(\mathcal M)<\infty$ is true for $\M=\{[\frac{1}{n+1},\frac{1}{n}[|~n\in\N\} \cup \{\{0\}\}$:\\
Consider the map $\phi:]0,\infty]\rightarrow ]0,\infty]$ with $\phi(x)=-x\log(x)$ for all $x>0$. The map $\phi$ is monotonically increasing on $]0,1/e[$. Choose $N\in \N$ so that
\[ \frac{1}{n}-\frac{1}{n+1} = \frac{1}{n(n+1)}<\frac{\log 2}{e} \]
is true for all $n\geq N$. Recall that the invariant measure $\mu$ of the Gauss function $T$ is defined by $\mu([a,b[) = \frac{1}{\log 2}\int_a^b \frac{1}{1+x}~{d}x$ for $0\leq a<b\leq 1$. We have
\[ \mu\left(\left[\frac{1}{n+1},\frac{1}{n}\right[\right) = \frac{1}{\log 2}\int_{\frac{1}{n+1}}^{\frac{1}{n}} \frac{1}{1+x}~{d}x \leq \frac{1}{\log 2}\int_{\frac{1}{n+1}}^{\frac{1}{n}} 1~{d}x = \frac{1}{\log 2} \cdot\frac{1}{n(n+1)} \]
for all $n\in \N$. This implies
\begin{align*}
&\phi\left(\mu\left(\left[\frac{1}{n+1},\frac{1}{n}\right[\right)\right) \leq \phi\left(\frac{1}{\log 2} \cdot\frac{1}{n(n+1)}\right)\\
&\leq \phi\left(\frac{1}{\log 2} \cdot\frac{1}{n^2}\right) = \frac{\log \log 2 + 2\log n}{(\log 2) n^2} \numberthis
\label{eq:ineqPhi}
\end{align*}
for all $n\geq N+1$. So we can conclude
\begin{align*}
H(\M) &= \sum_{n=1}^\infty \phi\left(\mu\left(\left[\frac{1}{n+1},\frac{1}{n}\right[ \right)\right)\\
&= \sum_{n=1}^{N} \phi\left(\mu\left(\left[\frac{1}{n+1},\frac{1}{n}\right[ \right)\right) + \sum_{n=N+1}^\infty \phi\left(\mu\left(\left[\frac{1}{n+1},\frac{1}{n}\right[ \right)\right)\\
&\stackrel{\eqref{eq:ineqPhi}}{\leq} \sum_{n=1}^{N} \phi\left(\mu\left(\left[\frac{1}{n+1},\frac{1}{n}\right[ \right)\right) + \sum_{n=N+1}^\infty \frac{\log \log 2 + 2\log n}{(\log 2) n^2}\\
&\leq \sum_{n=1}^{N} \phi\left(\mu\left(\left[\frac{1}{n+1},\frac{1}{n}\right[ \right)\right) + \sum_{n=1}^\infty \frac{\log \log 2}{(\log 2) n^2} + \sum_{n=1}^\infty \frac{2\log n}{(\log 2) n^2}< \infty,
\end{align*}
which allows us to apply Theorem~\ref{thm:mainThm} and get $\PE(T)= h(T)$.

\paragraph{Generalizing interval notion}
The main reason why the partition $\M=\{M_i\}_{i\in I}$ into sets of monotony for the map $T$ was required to be a partition into intervals is the fact that a collection of disjoint intervals can be ordered in such a way that the order relation between two different intervals corresponds to the order relation of the points within those intervals. This information about the order relation was utilized in \eqref{eq:f(is=it)} as one part of determining the number of sets with ordinal patterns of length $n$ intersecting a set $\M(\bm i)$ for $\bm i\in I^n$. To describe this specific ordering on the set of those intervals, we write
\begin{equation}
M_i<M_j ~ \mathrm{if}~ \omega_i<\omega_j~\mathrm{holds~ true~ for~ all}~ (\omega_i,\omega_j)\in M_i\times M_j.
\label{eq:orderedM}
\end{equation}
We can generalize this ordering of intervals in a way that allows us to ignore sets of points with measure zero and that preserves the correspondence between the order of the different sets $M_i$ and points within those sets. To achieve this, we write $A<_\mu B$ for $A,B\in \B(\Omega)$, if
\[ (\mu\times \mu)(\{ (\omega_1,\omega_2)\in A\times B|~\omega_1\geq \omega_2 \})=0 \]
holds true, where $(\mu\times \mu)$ is the product measure on the $\sigma$-algebra $\B(\Omega^2)$. So $A<_\mu B$ means that almost every point in $A$ is smaller than almost every point in $B$, which can be interpreted as a probabilistic formulation of \eqref{eq:orderedM}.\\
We say that $\M = \{M_i\}_{i\in I}$ is an ordered partition of $\Omega$, if $M_i<_\mu M_j$ or $M_j<_\mu M_i$ is true for all $i,j\in I$ with $i\neq j$.
Since elements of an ordered partition can be ordered the same way basic intervals could, up to sets with measure zero, our main theorem remains valid if we consider such a generalized partition as a partition into monotony sets.

\paragraph*{Acknowledgement}
The authors would like to thank Mike Todd (University of St. Andrews) for his valuable suggestions.

\printbibliography

\end{document}